\newcommand{\boxWithLabel}[4]
{
	\draw (#2,#3) -- (#2-#1,#3) -- (#2-#1,#3-#1) -- (#2,#3-#1) -- (#2,#3);
	\node (a) at (#2-0.5*#1,#3-0.5*#1) {#4};
}
\newtheorem{theorem}{Theorem}[section]
\newtheorem{corollary}[theorem]{Corollary}
\newtheorem{lemma}[theorem]{Lemma}
\theoremstyle{definition}
\theoremstyle{conjecture}
\newtheorem{defn}[theorem]{Definition}
\newcounter{x}
\newcounter{y}
\newcounter{z}
\begin{document}

\title{Congruences for Restricted Plane Overpartitions Modulo 4 and 8}

\author{ Ali H. Al-Saedi}
\address{Ali H. Al-Saedi}
\address{Oregon State University, Corvallis, OR 97331, USA}
\email{alsaedia@math.oregonstate.edu\\alsaedia@lifetime.oregonstate.edu}

\begin{abstract}
In 2009, Corteel, Savelief and Vuleti\'c generalized the concept of overpartitions to a new object called plane overpartitions. In recent work, the author considered a restricted form of plane overpartitions called $k$-rowed plane overpartions and proved a method to obtain congruences for these and other types of combinatorial generating functions. In this paper, we prove several restricted and unrestricted plane overpartition congruences modulo $4$ and $8$ using other techniques. 
\end{abstract}

\keywords{partitions, overpartitions, plane partitions, plane overpartitions}

\subjclass[2010]{11P83}

\maketitle


\section{\textbf{Introduction and Statement of Results}}
\subsection{Partitions and Overpartitions}

A {\it{partition}} of a positive integer $n$ is a nonincreasing sequence of positive integers that sum to $n$. The total number of partitions of $n$ is denoted by $p(n)$. One can also consider partitions where the parts are restricted to a specific set $S$ of integers and let $p(n;S)$ denote the number of partitions of $n$ into parts from $S$.
For example, consider the set
$$S=\{1,2,5,8\}.$$ Then $p(5;S)=4$ since the partitions of $5$ with parts from $S$ are
$$5,2+2+1, 2+1+1+1,1+1+1+1+1.$$ 
The generating function for this type of  partition is given by
\begin{align}\label{set}
\sum_{n=0}^{\infty}{p(n;S)q^n}=\prod_{n\in S}\frac{1}{1-q^n}.
\end{align}

Note that $S$ can be a multiset where repeated numbers are treated independently. For example, let $S=\{1,2_{1},2_{2},3_{1},3_{2}\}$, then we have the following partitions of $n=4$ into parts from $S$,
$$3_{1}+1,3_{2}+1,2_{1}+2_{1},2_{1}+2_{2},2_{2}+2_{2},2_{1}+1+1,2_{2}+1+1,1+1+1+1.$$   
Thus,  $p(4;S)=8.$ Note that repeated numbers in a multiset are given an intrinsic order in terms of their subscript.

An {\it{overpartition}} of a positive integer $n$ is a partition of $n$ in which the first occurrence of a part may be overlined. We denote the number of overpartitions of $n$ by $\overline{p}(n)$ and define $\overline{p}(0):=1$. 

For example, when $n=3$ we see that $\overline{p}(n)=8,$ with overpartitions given by
$$3,\overline{3}, 2+1, \overline{2}+1, 2+\overline{1}, \overline{2}+\overline{1}, 1+1+1, \overline{1}+1+1.$$

An overpartition can be interpreted as a pair of partitions one into distinct parts corresponding with the overlined parts and the other unrestricted. Thus, we see that the generating function for overpartitions is given by
\begin{equation}
\overline{P}(q):=\sum_{n=0}^{\infty}\overline{p}(n)q^{n}=\prod_{n=1}^{\infty} \frac{1+q^n}{1-q^n}= 1+2q+4q^{2}+8q^{3}+14q^{4}+ \cdots.
\end{equation}

 Overpartitions have been studied extensively by  Corteel, Lovejoy, Osburn,  Bringmann, Mahlburg, Hirschhorn, Sellers, and many other mathematicians. For example, see \cite{bringmann2008rank}, \cite{corteel2004overpartitions}, \cite{hirschhorn2005arithmetic},  \cite{hirschhorn}, \cite{hirschhorn2006arithmetic}, \cite{lovejoy2003gordon}, \cite{lovejoy2004overpartition}, \cite{lovejoy2008rank} and \cite{mahlburg2004overpartition} to mention a few.

The well-known Jacobi triple product identity \cite{andrews1965simple} is given by
\begin{align}\label{jacobi1}
\prod_{n=1}^{\infty}{(1-q^{2n})(1+zq^{2n-1})(1+z^{-1} q^{2n-1})}=\sum_{n=-\infty}^{\infty}z^n q^{n^2},
\end{align}
which converges when $z\not =0$ and $|q|<1$. Letting $z=1$ in \eqref{jacobi1}, one can observe one of Ramanujan's classical theta functions
\begin{align}\label{mo1}
\phi(q):=\sum_{n=-\infty}^{\infty}{q^{n^2}}=\prod_{n=1}^{\infty}{(1-q^{2n})}{(1+q^{2n-1})^2}.
\end{align}
Replacing $q$ by $-q$ in \eqref{mo1}, we get
\begin{align}\label{mo2}
\phi(-q)=\sum_{n=-\infty}^{\infty}{(-1)^n q^{n^2}}=\prod_{n=1}^{\infty}{(1-q^{2n})}{(1-q^{2n-1})^2}
=\prod_{n=1}^{\infty}{\frac{(1-q^{n})}{(1+q^{n})}}=\frac{1}{\overline{P}(q)}.
\end{align}
Note that $\phi(q)$ can be written as
\begin{align*}
\phi(q)=1+2\sum_{n=1}^{\infty}q^{n^2}.
\end{align*}
Thus, the generating function of overpartitions has the following $2$-adic expansion,
\begin{align}\label{GenParOne}
\overline{P}(q)=&\frac{1}{\phi(-q)}=\frac{1}{1+2\sum_{n=1}^{\infty}(-1)^{n^2}q^{n^2}}\nonumber\\
=&1+\sum_{k=1}^{\infty}{2^k(-1)^k\left(\sum_{n=1}^{\infty}(-1)^{n^2}q^{n^2}\right)^k}\nonumber\\
=&1+\sum_{k=1}^{\infty}{2^k}\sum_{n_{1}^2+\dots+n_{k}^2=n}{(-1)^{n+k}q^n}\nonumber\\
=&1+\sum_{k=1}^{\infty}{2^k}\sum_{n=1}^{\infty}{(-1)^{n+k}c_{k}(n)q^n},
\end{align}
where $c_{k}(n)$ denotes the number of representations of $n$ as a sum of $k$ squares of positive
integers. 

 Several overpartition congruences modulo small powers of $2$ have been found using the $2$-adic expansion formula \eqref{GenParOne}.  For example, Mahlburg \cite{mahlburg2004overpartition} proves that 
\begin{align*}
\{n\in \mathbb{N}\;|\;\overline{p}(n)\equiv 0\pmod{64}\}
\end{align*}
is a set of density 1\footnote{The sequence $A$ of positive integers $a1<a2<\cdots$  has a density $\delta(A)$ if ${\delta}(A)=\underset{n \to \infty}{\lim} \frac{A(n)}{n}.$ For more details about arithmetic density of integers, one may see \cite{niven1951asymptotic}.
}.


Later, Kim \cite{kim2008overpartition} generalized Mahlburg's result modulo $128$. Furthermore, Mahlburg conjectures \cite{mahlburg2004overpartition} that for any integer $k\geq 1$,
\begin{align*}
\overline{p}(n)\equiv 0\pmod{2^k},
\end{align*}
for almost all integers $n$.

\subsection{Plane Partitions and Plane Overpartitions}
As a natural generalization of partitions, MacMahon \cite{andrews}  defines a {\it{plane partition}} of $n$ as a two dimensional array $\pi=(\pi_{ij})_{i,j \geq 1 }$ of nonnegative integers $\pi_{ij}$, with $i$ indexing rows and $j$ indexing columns, that are weakly decreasing in both rows and columns and for which $|\pi|:=\sum{\pi_{ij}}=n<\infty$.

 Corteel,  Savelief and Vuleti{\'c} \cite{corteelplane} define {\it{plane overpartitions}} as a generalization of the overpartitions  as follows.
 \begin{defn}[Corteel, Savelief, Vuleti{\'c}, \cite{corteelplane}]
 A plane overpartition is a plane partition where 
 \begin{enumerate}
 	\item
 	in each row the last occurrence of an integer can be overlined or not and all the other occurrences of this integer in the row are not overlined and,
 	\item
 	 in each column the first occurrence of an integer can be overlined or not and all the other occurrences of this integer in the column are overlined.
 \end{enumerate}
 \end{defn}  
 
 Plane overpartitions can be  represented  in the form of Ferrers-Young diagrams. For example, 
$$
{
	\begin{tikzpicture}[scale=1]

	\boxWithLabel{0.5}{8}{4}{5}
	\boxWithLabel{0.5}{8.5}{4}{4}
	\boxWithLabel{0.5}{9}{4}{$\bar{4}$}
	\boxWithLabel{0.5}{9.5}{4}{3}
	\boxWithLabel{0.5}{10}{4}{$\bar{1}$}

	\boxWithLabel{0.5}{8}{3.5}{3}
	\boxWithLabel{0.5}{8.5}{3.5}{2}
	\boxWithLabel{0.5}{9}{3.5}{1}
	
	\boxWithLabel{0.5}{8}{3}{2}
	\boxWithLabel{0.5}{8.5}{3}{$\bar{2}$}
	\boxWithLabel{0.5}{9}{3}{$\bar{1}$}
	
	\boxWithLabel{0.5}{8}{2.5}{1}
	\boxWithLabel{0.5}{8.5}{2.5}{1}
	
	\boxWithLabel{0.5}{8}{2}{$\bar{1}$}
\end{tikzpicture}
}
$$
is a plane overpartition of $31$.



The total number of plane overpartitions of $n$ is denoted by $\overline{pl}(n)$.
For example, there are $16$ plane overpartitions for $n=3$ are as follows,
$$
{
	\begin{tikzpicture}[scale=1]
	
	\boxWithLabel{0.5}{0}{4}{3}
	\boxWithLabel{0.5}{1}{4}{$\bar{3}$}
	
	\boxWithLabel{0.5}{2}{4}{2}
	\boxWithLabel{0.5}{2.5}{4}{1}

	\boxWithLabel{0.5}{3.5}{4}{$\bar{2}$}
	\boxWithLabel{0.5}{4}{4}{1}

	\boxWithLabel{0.5}{5}{4}{2}
	\boxWithLabel{0.5}{5.5}{4}{$\bar{1}$}
	
	\boxWithLabel{0.5}{6.5}{4}{$\bar{2}$}
	\boxWithLabel{0.5}{7}{4}{$\bar{1}$}
	
	\boxWithLabel{0.5}{8}{4}{1}
	\boxWithLabel{0.5}{8.5}{4}{1}
	\boxWithLabel{0.5}{9}{4}{1}
	
	\boxWithLabel{0.5}{10}{4}{1}
	\boxWithLabel{0.5}{10.5}{4}{1}
	\boxWithLabel{0.5}{11}{4}{$\bar{1}$}

	\boxWithLabel{0.5}{2}{3}{2}
	\boxWithLabel{0.5}{2}{2.5}{1}
	
	\boxWithLabel{0.5}{3.5}{3}{$\bar{2}$}
	\boxWithLabel{0.5}{3.5}{2.5}{1}
	
	\boxWithLabel{0.5}{5}{3}{2}
	\boxWithLabel{0.5}{5}{2.5}{$\bar{1}$}
	
	\boxWithLabel{0.5}{6.5}{3}{$\bar{2}$}
	\boxWithLabel{0.5}{6.5}{2.5}{$\bar{1}$}

	\boxWithLabel{0.5}{8}{3}{1}
	\boxWithLabel{0.5}{8.5}{3}{1}
	\boxWithLabel{0.5}{8}{2.5}{$\bar{1}$}
	
	\boxWithLabel{0.5}{10}{3}{1}
	\boxWithLabel{0.5}{10.5}{3}{$\bar{1}$}
	\boxWithLabel{0.5}{10}{2.5}{$\bar{1}$}

	\boxWithLabel{0.5}{8}{1.5}{1}
	\boxWithLabel{0.5}{8}{1}{$\bar{1}$}
	\boxWithLabel{0.5}{8}{0.5}{$\bar{1}$}

	\boxWithLabel{0.5}{10}{1.5}{$\bar{1}$}
	\boxWithLabel{0.5}{10}{1}{$\bar{1}$}
	\boxWithLabel{0.5}{10}{0.5}{$\bar{1}$}
	
	\end{tikzpicture}
}
$$

Corteel, Savelief and Vuleti\'c \cite{corteelplane} use various methods to obtain the following generating function for plane overpartitions,
\begin{equation}\label{GenPl}
\overline{PL}(q):=\sum_{n=0}^{\infty}{\overline{pl}(n)q^n}=\prod_{n=1}^{\infty} \frac{(1+q^n)^{n}}{(1-q^n)^{n}}.
\end{equation}

Using the notation of Lovejoy and Mallet \cite{lovejoyncolor}, the generating function of plane overpartitions is also known as the generating function of $n$-color overpartitions. An $n$-color partition is a partition in which each number $n$ may appear in $n$ colors, with parts ordered first according to size and then according to color\footnote{We note that this is a different definition from what is often meant by $n$-color partition, in which each part regardless of the size may appear in one of $n$ colors}. For example, there are  6 $n$-color partitions of 3,
\begin{align*}
3_{3}, 3_{2},3_{1},2_{2}+1_{1},2_{1}+ 1_{1},1_{1}+1_{1}+1_{1}.
\end{align*}
 An $n$-color overpartition is defined similarly to be an $n$-color partition in which the final occurrence of a part $n_{j}$ may be overlined. For example, there are 16 $n$-color overpartitions of 3,
$$3_{3}, 3_{2}, 3_{1}, \overline{3}_{3}, \overline{3}_{2}, \overline{3}_{1},  2_{2}+1_{1}, \overline{2}_{2}+1_{1}, {2}_{2}+\overline{1}_{1}, \overline{2}_{2}+\overline{1}_{1}, {2}_{1}+{1}_{1},\overline{2}_{1}+{1}_{1}, {2}_{1}+\overline{1}_{1},\overline{2}_{1}+\overline{1}_{1},{1}_{1}+{1}_{1}+{1}_{1},{1}_{1}+{1}_{1}+\overline{1}_{1}.$$

In \cite{ali1}, the author defines a restricted form of plane overpartitions  called {\it{$k$-rowed plane overpartitions}} as plane overpartitions with at most $k$ rows. The total number of $k$-rowed plane overpartitions of $n$ is denoted by $\overline{pl}_{k}(n)$ and we define $\overline{pl}_{k}(0):=1$. The generating function is given by the following lemma.

 \begin{lemma}[Al-Saedi,\cite{ali1}]\label{lemma3}
 For a fixed positive integer $k$, the generating function for $k$-rowed plane overpartitions is given by
 \begin{equation}\label{PLOvkGen}
 \overline{PL}_{k}(q):=\sum_{n=0}^{\infty}{\overline{pl}_{k}(n)q^{n}}
 =\prod_{n=1}^{\infty}{\frac{(1+q^{n})^{\text{min}\{k,n\}}}{(1-q^{n})^{\text{min}\{k,n\}}}}.
 \end{equation}
 \end{lemma}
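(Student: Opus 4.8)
The plan is to mirror the derivation of the unrestricted generating function \eqref{GenPl}, carrying the bound on the number of rows through the computation as a one-sided truncation. First I would pass from a plane overpartition $\pi$ to the sequence of partitions $(\lambda^{(t)})_{t\in\mathbb{Z}}$ obtained by reading $\pi$ along its diagonals $t=j-i$, decorated so as to record the overlines. Consecutive slices interlace, $|\pi|=\sum_{t}|\lambda^{(t)}|$, and every slice is empty for $|t|$ large. The two overlining rules in the definition of a plane overpartition (last occurrence in a row, first occurrence in a column) translate into an independent binary marking attached to each interlacing step, and summing over these markings is exactly what replaces a factor $\tfrac{1}{1-q^{n}}$ by $\tfrac{1+q^{n}}{1-q^{n}}$.

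The key structural observation is how the row bound reads off the diagonal sequence. Since the entry $\pi_{i,1}$ is the largest part of the slice on diagonal $t=1-i$, row $i$ is nonempty if and only if $\lambda^{(1-i)}\neq\varnothing$; hence the number of rows of $\pi$ equals $\#\{t\le 0:\lambda^{(t)}\neq\varnothing\}$. Consequently $\pi$ has at most $k$ rows if and only if $\lambda^{(t)}=\varnothing$ for all $t\le -k$, i.e. the ``growing'' half of the interlacing sequence is truncated to at most $k$ nonempty slices. I would then evaluate the weighted sum over such truncated, decorated interlacing sequences by the transfer-operator (vertex-operator) method used to prove \eqref{GenPl}: the decoration turns the commutation factor into $\tfrac{1+zw}{1-zw}$, and pairing the $a$-th growing slice (spectral parameter $q^{a-1/2}$, now only $1\le a\le k$) with the $b$-th shrinking slice (parameter $q^{b-1/2}$, $b\ge 1$) gives $\prod_{1\le a\le k,\ b\ge 1}\tfrac{1+q^{a+b-1}}{1-q^{a+b-1}}$. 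Equivalently, this is the Cauchy identity for the $t=-1$ Hall--Littlewood (Schur $Q$) functions specialized at $x_a=q^{a-1/2}$ ($1\le a\le k$) and $y_b=q^{b-1/2}$.

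Collecting exponents finishes the proof: for a fixed $n$ the number of pairs $(a,b)$ with $a+b-1=n$, $1\le a\le k$ and $b\ge 1$ is exactly $\min\{k,n\}$, so the product becomes $\prod_{n\ge 1}\bigl(\tfrac{1+q^{n}}{1-q^{n}}\bigr)^{\min\{k,n\}}$, as claimed. As checks, $k=1$ collapses the growing side to the single slice $t=0$, recovering that one-rowed plane overpartitions are ordinary overpartitions, while letting $k\to\infty$ restores \eqref{GenPl}; the formula also matches the $n$-color reading, in which a part $m$ is allowed only the colors $1,\dots,\min\{k,m\}$.

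The main obstacle is the bijective bookkeeping rather than the final count. I expect the delicate points to be (i) verifying that the two overlining conventions correspond to a genuinely independent marking on each interlacing step whose weighted sum produces precisely the numerator $1+q^{n}$ (this is where one must use both rules, not just one), and (ii) the center-diagonal $t=0$ accounting, making sure it is counted once so that the truncation yields exactly $\min\{k,n\}$ and not an off-by-one shift. Both are local checks that do not interact with the row bound, so once the unrestricted computation behind \eqref{GenPl} is set up in the diagonal/operator form, the restricted statement follows by simply capping the range of $a$.
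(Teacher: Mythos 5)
The paper never proves Lemma \ref{lemma3}; it is imported verbatim from \cite{ali1}, so there is no printed proof here to compare against, and I can only assess your plan on its own terms. Your route --- diagonal slices $\lambda^{(t)}$, $t=j-i$, interlacing sequences with a binary marking encoding the overlines, and the vertex-operator/Schur-$Q$ evaluation --- is precisely the Corteel--Savelief--Vuleti\'c machinery behind \eqref{GenPl}, and your two structural observations are correct: since $\pi_{i,1}$ is the largest part of $\lambda^{(1-i)}$ and the slices with $t\le 0$ are nested, the number of rows equals $\#\{t\le 0:\lambda^{(t)}\neq\varnothing\}$, so the row bound is a clean truncation of the growing half of the sequence; and the exponent count $\#\{(a,b):1\le a\le k,\ b\ge 1,\ a+b-1=n\}=\min\{k,n\}$ is right, as are your sanity checks at $k=1$ and $k\to\infty$. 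The one place where essentially all of the work lives is the point you flag yourself: that the two overlining rules correspond to a marking whose weighted sum converts the commutation factor $\frac{1}{1-zw}$ into $\frac{1+zw}{1-zw}$. That is not a small local verification --- it is the main technical content of \cite{corteelplane} for the unrestricted case --- but since it genuinely does not interact with the truncation, your reduction of the $k$-rowed statement to it is sound, and the proposal is correct as an outline. A shorter path to the same end, which avoids rerunning the operator computation: \cite{corteelplane} already establish that plane overpartitions fitting inside a $k\times c$ box have generating function $\prod_{i=1}^{k}\prod_{j=1}^{c}\frac{1+q^{i+j-1}}{1-q^{i+j-1}}$, and letting $c\to\infty$ and collecting exponents yields \eqref{PLOvkGen} immediately by your final count.
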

  
The author proves in \cite{ali1} that for all $n\geq 0$,
\begin{align*}
\overline{pl}_{4}(4n+1)+\overline{pl}_{4}(4n+2)+\overline{pl}_{4}(4n+3)\equiv 0\pmod{4}.
\end{align*}

\subsection{Main Results}\label{MainResultSec}

In this section, we state the main results of this paper. First, we start with results that involve plane and restricted plane overpartition congruences modulo $4$ and $8$.  Then, we state a few results for overpartition congruences modulo $8$ and congruence relations modulo $8$ between overpartitions and plane overpartitions.

 Recall that $\overline{p}_{o}(n)$ denotes the number of overpartitions of a positive integer $n$ into odd parts and $\overline{p}_{o}(0)=1.$
 
\begin{theorem}\label{PlOverPaTh}
For every integer $n\geq 1,$

\[   
	\overline{pl}(n)\equiv \overline{p}_{o}(n)\equiv 
	\begin{cases}
	2\pmod{4} &\text{if $n$ is a square or twice a square},\\
	0\pmod{4} &\text{otherwise}. \ 
	\end{cases}
	\]

\end{theorem}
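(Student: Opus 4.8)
The plan is to prove the two stated congruences separately, both as identities of formal power series modulo $4$. Throughout I write $\overline{P}_o(q):=\sum_{n\ge 0}\overline{p}_o(n)q^n$; since an overpartition into odd parts is a pair consisting of a partition into distinct odd parts (the overlined parts) together with an ordinary partition into odd parts, one has the product form
\begin{equation*}
\overline{P}_o(q)=\prod_{\substack{n\ge 1\\ n\ \mathrm{odd}}}\frac{1+q^n}{1-q^n}.
\end{equation*}
Because every factor in each of the products \eqref{GenPl} and the one above is $1+O(q^n)$, the coefficient of $q^N$ is determined by finitely many factors, so all the manipulations below are legitimate congruences of power series.

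For the first congruence $\overline{pl}(n)\equiv\overline{p}_o(n)\pmod 4$, I would reduce the plane overpartition product \eqref{GenPl} factor by factor. Writing $\frac{1+q^n}{1-q^n}=1+2a_n$ with $a_n=\frac{q^n}{1-q^n}\in\mathbb{Z}[[q]]$, the binomial theorem gives $\bigl(1+2a_n\bigr)^n\equiv 1+2n\,a_n\pmod 4$, since every term carrying two or more factors of $2$ vanishes mod $4$. If $n$ is even then $2n\equiv 0\pmod 4$ and the factor is $\equiv 1$; if $n$ is odd then $2n\equiv 2\pmod 4$ and the factor is $\equiv 1+2a_n=\frac{1+q^n}{1-q^n}$. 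Multiplying over all $n$ collapses the even factors and leaves exactly $\prod_{n\ \mathrm{odd}}\frac{1+q^n}{1-q^n}$, so $\overline{PL}(q)\equiv\overline{P}_o(q)\pmod 4$.

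For the explicit evaluation it then suffices to work with $\overline{P}_o(q)$. Each odd factor expands as $\frac{1+q^n}{1-q^n}=1+2\sum_{j\ge 1}q^{jn}$, so in the product only the constant term together with a single ``$2\,(\cdots)$'' term drawn from one factor survives modulo $4$; this yields
\begin{equation*}
\overline{P}_o(q)\equiv 1+2\sum_{m\ge 1}d_o(m)\,q^m \pmod 4,
\end{equation*}
where $d_o(m)$ denotes the number of odd divisors of $m$. Hence $\overline{p}_o(m)\equiv 2\pmod 4$ exactly when $d_o(m)$ is odd, and $\equiv 0$ otherwise. Writing $m=2^a t$ with $t$ odd, the odd divisors of $m$ are the divisors of $t$, so $d_o(m)$ is odd iff $t$ is a perfect square; and $m=2^a t$ with $t$ an odd square is precisely a perfect square (when $a$ is even) or twice a perfect square (when $a$ is odd). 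This gives the claimed dichotomy and, combined with the first step, completes the proof. The point requiring the most care is the factor-wise reduction modulo $4$ in the plane-overpartition product — specifically tracking the parity of the exponent $n$ so that even factors disappear while odd factors survive intact; the divisor-parity characterization is then the standard fact that a positive integer has an odd number of divisors iff it is a perfect square.
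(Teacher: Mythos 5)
Your proof is correct, and its first half is essentially the paper's argument: the paper also reduces $\prod_{n\ge 1} \left(\frac{1+q^n}{1-q^n}\right)^n$ modulo $4$ by showing each even-indexed factor collapses to $1$ and each odd-indexed factor survives as $\frac{1+q^n}{1-q^n}$ (it does this via its Lemma~\ref{lem1}, i.e.\ $f(q^n)^2\equiv 1\pmod 4$ for $f(q)=\frac{1+q}{1-q}$, where your route is the equivalent binomial expansion $(1+2a_n)^n\equiv 1+2na_n\pmod 4$), arriving at $\overline{PL}(q)\equiv \overline{P}_o(q)\pmod 4$. The only divergence is in the second half: the paper simply cites the Hirschhorn--Sellers evaluation of $\overline{p}_o(n)\bmod 4$ (Theorem~\ref{OverOddGenTh}, obtained there from $\overline{P}_o(q)\equiv\phi(q)\phi(q^2)\pmod 4$), whereas you re-derive it self-containedly by expanding $\overline{P}_o(q)\equiv 1+2\sum_{m\ge 1}d_o(m)q^m\pmod 4$ and using the fact that the number of odd divisors of $m$ is odd exactly when the odd part of $m$ is a square, i.e.\ when $m$ is a square or twice a square. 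That divisor-counting computation is sound (and is in fact the same device the paper uses in its proof of Theorem~\ref{OvPa}); it buys you independence from the quoted theta-function identity at the cost of a short elementary argument.
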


For an integer $n$ and a prime $p$, let $ord_{p}(n)$ denote the highest nonnegative power of $p$ such that $p^{ord_{p}(n)}|n.$ The following theorem gives a congruence relation modulo $4$ between $\overline{pl}(n)$ and $ord_{p}(n)$ for each odd prime $p|n.$
\begin{theorem}\label{OvPa}
For any integer $n>1,$
\begin{equation}\label{opcon1}
\overline{pl}(n)\equiv 2\cdot \prod_{odd\; prime\; p|n}{\left(ord_{p}(n)+1\right)}\pmod{4}.
\end{equation}
\end{theorem}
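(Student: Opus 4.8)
The plan is to derive this congruence directly from Theorem~\ref{PlOverPaTh}, reducing it to an elementary fact about $2$-adic valuations. Since the right-hand side of \eqref{opcon1} is $2$ times an integer, its residue modulo $4$ is governed entirely by the parity of
\[
M(n):=\prod_{\text{odd prime } p\mid n}\bigl(ord_{p}(n)+1\bigr):
\]
if $M(n)$ is odd then $2M(n)\equiv 2\pmod 4$, while if $M(n)$ is even then $2M(n)\equiv 0\pmod 4$. A product of integers is odd exactly when every factor is odd, so $M(n)$ is odd if and only if $ord_{p}(n)$ is even for every odd prime $p\mid n$; equivalently, the odd part of $n$ is a perfect square.

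First I would record the key equivalence: writing $n=2^{a}m$ with $m$ odd, I claim that $n$ is a square or twice a square if and only if $m$ is a perfect square. If $m=t^{2}$, then $n=2^{a}t^{2}$ is a perfect square when $a$ is even, while $n=2\,\bigl(2^{(a-1)/2}t\bigr)^{2}$ exhibits $n$ as twice a square when $a$ is odd. Conversely, both a square and twice a square have the shape (a power of $2$) times the square of an odd integer, so in either case the odd part $m$ must itself be a perfect square. Thus the condition ``$n$ is a square or twice a square'' coincides precisely with ``$ord_{p}(n)$ is even for every odd prime $p\mid n$,'' which is exactly the condition under which $M(n)$ is odd.

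With this equivalence in hand, the proof splits along the two cases of Theorem~\ref{PlOverPaTh}. If $n$ is a square or twice a square, then $M(n)$ is odd, so $2M(n)\equiv 2\equiv \overline{pl}(n)\pmod 4$. Otherwise some odd prime $p\mid n$ has $ord_{p}(n)$ odd, which makes the factor $ord_{p}(n)+1$ even and hence $M(n)$ even, so $2M(n)\equiv 0\equiv \overline{pl}(n)\pmod 4$. In both cases \eqref{opcon1} holds.

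There is essentially no analytic obstacle here, since all of the generating-function content is already packaged in Theorem~\ref{PlOverPaTh}; the only point requiring genuine care is the parity and valuation bookkeeping, in particular verifying cleanly that the union of the two exceptional sets (``square'' and ``twice a square'') is exactly the set of integers whose odd part is a perfect square. I would also note the boundary convention that the empty product, occurring precisely when $n=2^{a}$ is a power of $2$, equals $1$ and is therefore odd; this correctly yields $\overline{pl}(n)\equiv 2\pmod 4$, consistent with every power of $2$ being either a square (when $a$ is even) or twice a square (when $a$ is odd).
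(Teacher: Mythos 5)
Your proof is correct, but it takes a genuinely different route from the paper's. The paper does not pass through Theorem~\ref{PlOverPaTh} at all at the level of its statement; instead it reuses the intermediate congruence $\sum_{n\ge 0}\overline{pl}(n)q^{n}\equiv \prod_{m\ge 1}f(q^{2m-1})\equiv 1+2\sum_{m\geq 1}(q^{m}+q^{3m}+q^{5m}+\cdots)\pmod{4}$ and extracts the coefficient of $q^{n}$ directly: each occurrence of $q^{n}$ comes from $q^{dm}$ with $d$ an odd divisor of $n$, so the coefficient is twice the number of odd divisors of $n$, which is exactly $2\prod_{\text{odd prime }p\mid n}\bigl(ord_{p}(n)+1\bigr)$ by the standard divisor-counting formula. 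You instead take the statement of Theorem~\ref{PlOverPaTh} as a black box and verify that $2M(n)\bmod 4$ agrees with its two cases, via the (correctly proved) equivalence that the odd part of $n$ is a perfect square if and only if $n$ is a square or twice a square; your handling of the empty product for $n=2^{a}$ is also right, and there is no circularity since Theorem~\ref{PlOverPaTh} is established independently. The trade-off: your argument is shorter and purely elementary once Theorem~\ref{PlOverPaTh} is available, but it inherits the dependence of that theorem on the Hirschhorn--Sellers classification of $\overline{p}_{o}(n)\pmod 4$ and only confirms the formula's residue class; the paper's computation is self-contained from the generating function and exhibits the arithmetic meaning of the right-hand side as twice the odd-divisor count, which is the real content of the identity.
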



 Next Theorem gives a systematic pattern of congruences modulo $4$ for even rowed plane overpartitions.

\begin{theorem}\label{th11}
	Let $k\geq 2$ be a positive even integer, $S_{k}:=\{j\;|\; j\;\mbox{odd}\;, 1\leq j \leq k-1\}$ and $\ell$ be the least common multiple of the integers in $S_{k}$. Then for any odd prime $p<k$, $1\leq r\leq ord_{p}(\ell),$ and  $n\geq 1,$
	\[   
	\overline{pl}_{k}(\ell n+p^r)\equiv
	\begin{cases}
	0\pmod{4} &\text{if $r$ is odd},\\
	2\pmod{4} &\text{if $r$ is even}. \ 
	\end{cases}
	\]
	Moreover, 
	for all $n\geq 1,$
	\[   
	\overline{pl}_{k}(\ell n)\equiv 
	\begin{cases}
	0\pmod{4} &\text{if $k\equiv 0\pmod{4}$},\\
	2\pmod{4} &\text{if $k\equiv 2\pmod{4}$}. \ 
	\end{cases}
	\]
\end{theorem}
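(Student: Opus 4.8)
The plan is to reduce the generating function \eqref{PLOvkGen} modulo $4$ to a simple divisor-counting series. The crucial starting observation is that
$$\frac{1+q^{n}}{1-q^{n}} = 1 + 2\sum_{j\geq 1}q^{jn},$$
so squaring gives $\left(\frac{1+q^{n}}{1-q^{n}}\right)^{2}\equiv 1\pmod{4}$, since every term produced by the cross-multiplication carries a factor $2\cdot 2 = 4$. Hence any even power of $\frac{1+q^{n}}{1-q^{n}}$ is $\equiv 1\pmod 4$. Because $k$ is even, in the product
$$\overline{PL}_{k}(q) = \prod_{n=1}^{\infty}\left(\frac{1+q^{n}}{1-q^{n}}\right)^{\min\{k,n\}}$$
every factor with $n\geq k$ has the even exponent $k$, and every factor with $n<k$ and $n$ even has the even exponent $n$; all of these collapse to $1$ modulo $4$. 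Only the factors indexed by $n\in S_{k}$ (the odd $n$ in $[1,k-1]$) survive.

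For such an odd $n$, writing the exponent as $n=(n-1)+1$ with $n-1$ even gives
$$\left(\frac{1+q^{n}}{1-q^{n}}\right)^{n}\equiv \frac{1+q^{n}}{1-q^{n}} = 1 + 2\sum_{j\geq 1}q^{jn}\pmod{4}.$$
Expanding the finite product over $n\in S_{k}$ and discarding every cross term (each of which involves at least two factors of $2$), I obtain the key congruence
$$\overline{PL}_{k}(q)\equiv 1 + 2\sum_{n\in S_{k}}\sum_{j\geq 1}q^{jn}\pmod{4}.$$
Reading off the coefficient of $q^{N}$ yields $\overline{pl}_{k}(N)\equiv 2\,d_{k}(N)\pmod{4}$, where $d_{k}(N)$ denotes the number of divisors of $N$ lying in $S_{k}$, i.e. the number of odd divisors of $N$ not exceeding $k-1$. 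From here the entire theorem is a parity computation of a divisor count.

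To settle the first case, fix an odd prime $p<k$ and $1\leq r\leq ord_{p}(\ell)$. Since $\ell = \operatorname{lcm}(S_{k})$, every $m\in S_{k}$ divides $\ell n$, so $m\mid \ell n + p^{r}$ if and only if $m\mid p^{r}$; thus $d_{k}(\ell n+p^{r})$ counts the divisors of $p^{r}$ that lie in $S_{k}$. Here I will use that $ord_{p}(\ell)$ equals the largest $a$ with $p^{a}\leq k-1$, which forces $p^{r}\leq k-1$ and hence places all of $p^{0},p^{1},\ldots,p^{r}$ into $S_{k}$, giving a count of exactly $r+1$. Therefore $\overline{pl}_{k}(\ell n+p^{r})\equiv 2(r+1)\pmod 4$, which is $0$ when $r$ is odd and $2$ when $r$ is even. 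For the second case, every $m\in S_{k}$ divides $\ell n$, so $d_{k}(\ell n)=|S_{k}|=k/2$; hence $\overline{pl}_{k}(\ell n)\equiv k\pmod 4$, giving $0$ when $4\mid k$ and $2$ when $k\equiv 2\pmod 4$.

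The only genuinely delicate step is the identification $ord_{p}(\ell)=\max\{a:p^{a}\leq k-1\}$ together with the verification that no divisor of $p^{r}$ is accidentally excluded from $S_{k}$; everything else is the mechanical modulo-$4$ bookkeeping already set up by the two observations about $\frac{1+q^{n}}{1-q^{n}}$.
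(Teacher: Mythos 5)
Your proposal is correct and follows essentially the same route as the paper: reduce $\overline{PL}_{k}(q)$ modulo $4$ to $1+2\sum_{n\in S_{k}}\sum_{j\geq 1}q^{jn}$ via the observation that even powers of $\frac{1+q^n}{1-q^n}$ collapse to $1$, and then count how many times $q^{N}$ arises. Your packaging of that count as $d_{k}(N)$, the number of divisors of $N$ lying in $S_{k}$, together with the clean identification $ord_{p}(\ell)=\max\{a:p^{a}\leq k-1\}$, is a slightly tidier bookkeeping than the paper's term-by-term tracking, but the underlying argument is identical.
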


In addition, we prove the following theorem which gives an equivalence modulo $4$ between the $k$-rowed plane overpartition function for odd integers $k$  and the overpartition function.

\begin{theorem}\label{th22}
	Let $k$ be a nonnegative integer. Then, for all $n\geq 0,$
	\begin{align}
	\overline{pl}_{2k+1}(2n+1)\equiv \overline{p}(2n+1)\pmod{4}\label{id1}.
	\end{align}
\end{theorem}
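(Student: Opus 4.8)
The plan is to compare the two generating functions directly modulo $4$, exploiting the fact that even powers of the basic factor $\tfrac{1+q^n}{1-q^n}$ collapse mod $4$. First I would factor out one copy of the overpartition product from \eqref{PLOvkGen}: since $\overline{P}(q)=\prod_{n\ge1}\tfrac{1+q^n}{1-q^n}$, we may write $\overline{PL}_{2k+1}(q)=\overline{P}(q)\,R(q)$, where
\[
R(q)=\prod_{n=2}^{2k}\left(\frac{1+q^n}{1-q^n}\right)^{\,n-1}\cdot\prod_{n\ge 2k+1}\left(\frac{1+q^n}{1-q^n}\right)^{2k},
\]
the exponents being $\min\{2k+1,n\}-1$ (which is $0$ at $n=1$). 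The computational heart of the argument is the elementary identity
\[
\left(\frac{1+q^n}{1-q^n}\right)^2=1+4\sum_{m\ge1}m\,q^{nm}\equiv 1\pmod 4,
\]
obtained by expanding $(1+x)^2/(1-x)^2=1+\sum_{m\ge1}4m\,x^m$ with $x=q^n$.

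With this in hand I would reduce $R(q)$ mod $4$. Every factor in the second product has the even exponent $2k$, so it is $\equiv 1\pmod 4$; and in the first product $\big(\tfrac{1+q^n}{1-q^n}\big)^{n-1}\equiv 1\pmod 4$ whenever $n$ is odd (even exponent $n-1$), while only the even-$n$ factors survive with a single power. Hence
\[
R(q)\equiv\prod_{j=1}^{k}\frac{1+q^{2j}}{1-q^{2j}}=:F(q^2)\pmod 4,
\]
and the crucial structural observation is that $F(q^2)$ is a power series in $q^2$, i.e. it has \emph{only even-degree terms}. Consequently $\overline{PL}_{2k+1}(q)\equiv \overline{P}(q)\,F(q^2)\pmod 4$ (multiplying a mod-$4$ congruence of integer series by the integer series $\overline{P}(q)$ is legitimate term by term).

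The final step is to read off the odd-degree coefficients. From the $2$-adic expansion \eqref{GenParOne}, only the $k=1$ term survives mod $4$, giving $\overline{P}(q)\equiv 1+2\sum_{a\ge1}q^{a^2}\pmod 4$; in particular $\overline{p}(2n+1)\equiv 2\pmod4$ exactly when $2n+1$ is a perfect square and $\equiv0$ otherwise. For $F$, the expansion $\tfrac{1+x^j}{1-x^j}=1+2\sum_{i\ge1}x^{ij}$ yields $F(x)\equiv 1+2\sum_{m\ge1}d_{\le k}(m)\,x^m\pmod 4$, where $d_{\le k}(m)$ counts the divisors of $m$ that are at most $k$; thus $F(q^2)\equiv 1+2\sum_{m\ge1}d_{\le k}(m)q^{2m}\pmod 4$. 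Multiplying the two, the cross term is a multiple of $2\cdot 2=4$ and vanishes, so
\[
\overline{PL}_{2k+1}(q)\equiv 1+2\sum_{a\ge1}q^{a^2}+2\sum_{m\ge1}d_{\le k}(m)\,q^{2m}\pmod 4.
\]
Extracting the coefficient of $q^{2n+1}$, the $F$-contribution (only even powers) drops out entirely, leaving precisely the odd-square term $2\cdot[\,2n+1\text{ is a square}\,]$, which matches $\overline{p}(2n+1)$. The substantive point, and the step I expect to require the most care, is the bookkeeping that makes \emph{all} the extra factors of $R(q)$ disappear mod $4$ except for those contributing only to even powers of $q$; once the square-collapse lemma and the parity of the exponents $n-1$ and $2k$ are pinned down, the odd-coefficient comparison is immediate.
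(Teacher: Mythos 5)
Your proposal is correct and follows essentially the same route as the paper: both arguments reduce the generating function modulo $4$ to $\overline{P}(q)\cdot\prod_{j=1}^{k}\frac{1+q^{2j}}{1-q^{2j}}$ using the square-collapse fact $f(q^n)^2\equiv 1\pmod 4$ (Lemma \ref{lem1}), and both finish by observing that the extra factor contributes only to even powers of $q$. The only cosmetic difference is that you factor out $\overline{P}(q)$ first and collapse the residual exponents $\min\{2k+1,n\}-1$ by parity, whereas the paper collapses the original exponents first and then reconstitutes $\overline{P}(q)$ by inserting $g(q^{2j})=f(q^{2j})^{-1}\equiv f(q^{2j})\pmod 4$.
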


Next result gives a pattern of congruences modulo $4$ between $\overline{pl}_{k}(n)$ and $\overline{p}(n)$ for odd $k$.

\begin{theorem}\label{th333}
Let $k\geq 2$ and $\ell$ be the least common multiple of all positive even integers $\leq 2k$. Then for all integers $n\geq 1,$ 
\begin{align}\label{th333eq1}
\overline{pl}_{2k+1}(\ell n+2^j)\equiv \overline{p}(\ell n+2^j)\pmod{4},
\end{align} 
where $j\geq 2, j\equiv 0\pmod{2}$ and $2^{j-1}\leq k.$ Moreover, if $k\equiv 0\pmod{2}$, then for all integers $n\geq 0$ 
\begin{align}\label{th333eq2}
\overline{pl}_{2k+1}(\ell n)\equiv \overline{p}(\ell n)\pmod{4}.
\end{align} 
\end{theorem}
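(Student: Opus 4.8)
The plan is to compare the two generating functions coefficient-by-coefficient modulo $4$. By Lemma~\ref{lemma3} (with $k$ replaced by $2k+1$) and the product formula $\overline{P}(q)=\prod_{n\ge1}\frac{1+q^n}{1-q^n}$, the ratio is
\[
\frac{\overline{PL}_{2k+1}(q)}{\overline{P}(q)}=\prod_{n=1}^{\infty}\left(\frac{1+q^n}{1-q^n}\right)^{\min\{2k+1,n\}-1}.
\]
First I would observe that $\frac{1+q^n}{1-q^n}=1+2(q^n+q^{2n}+\cdots)$, so $\left(\frac{1+q^n}{1-q^n}\right)^2\equiv 1\pmod 4$; hence every factor with even exponent is $\equiv 1\pmod 4$. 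The exponent $\min\{2k+1,n\}-1$ is odd exactly when $n$ is even and $n\le 2k$, so stripping off even powers leaves
\[
\overline{PL}_{2k+1}(q)\equiv \overline{P}(q)\prod_{m=1}^{k}\frac{1+q^{2m}}{1-q^{2m}}\pmod 4.
\]
Expanding $\prod_{m=1}^{k}\bigl(1+2\sum_{j\ge1}q^{2mj}\bigr)$ and discarding coefficients divisible by $4$ gives $\overline{PL}_{2k+1}(q)\equiv \overline{P}(q)\bigl(1+2\sum_{m=1}^{k}\sum_{j\ge1}q^{2mj}\bigr)\pmod 4$.

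Second, I would extract the coefficient of $q^N$, obtaining $\overline{pl}_{2k+1}(N)-\overline{p}(N)\equiv 2\sum_{m=1}^{k}\sum_{j\ge1}\overline{p}(N-2mj)\pmod 4$. Because of the factor $2$, only the inner sum modulo $2$ matters. Since $\overline{p}(0)=1$ while $\overline{p}(i)$ is even for every $i\ge 1$ (as $1+q^n\equiv 1-q^n\pmod 2$ forces $\overline{P}(q)\equiv 1\pmod 2$), every summand with $N-2mj>0$ is even, so only the term $2mj=N$ survives. Thus for odd $N$ the right side vanishes (recovering Theorem~\ref{th22}), while for even $N$,
\[
\overline{pl}_{2k+1}(N)-\overline{p}(N)\equiv 2\,e_k(N/2)\pmod 4,\qquad e_k(M):=\#\{d:d\mid M,\ d\le k\}.
\]

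Everything now reduces to the parity of $e_k(N/2)$, and here I would use that $\ell=2L$ with $L:=\mathrm{lcm}(1,2,\dots,k)$, so that every $d\le k$ divides $L$. For the first claim, $N=\ell n+2^{j}$ gives $M_0:=N/2=Ln+2^{j-1}$; for $d\le k$ we have $d\mid Ln$, hence $d\mid M_0\iff d\mid 2^{j-1}$. Since $2^{j-1}\le k$, all $j$ divisors $1,2,\dots,2^{j-1}$ of $2^{j-1}$ qualify and no others do, so $e_k(M_0)=j$, which is even by hypothesis. For the second claim, $M_0=Ln$, and since each of $1,2,\dots,k$ divides $L\mid Ln$, we get $e_k(Ln)=k$, even by hypothesis ($n=0$ being trivial). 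In both cases $2\,e_k(M_0)\equiv 0\pmod 4$, which is the assertion.

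The mod-$4$ reduction of the generating function is routine once the identity $\left(\frac{1+q^n}{1-q^n}\right)^2\equiv 1\pmod 4$ is in hand; the real content, and the step I expect to require the most care, is the elementary divisor count. The choice of modulus $\ell=2L$ is exactly what makes divisibility of $M_0$ by any $d\le k$ collapse either to divisibility of the single term $2^{j-1}$ or to an automatic divisibility, and the hypothesis $2^{j-1}\le k$ is precisely what guarantees that \emph{all} divisors of $2^{j-1}$ are counted. The main points to verify are these edge conditions and the claim that no divisor of $M_0$ exceeding what the argument predicts can slip in below $k$.
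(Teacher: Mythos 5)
Your proposal is correct and follows essentially the same route as the paper: reduce $\overline{PL}_{2k+1}(q)$ modulo $4$ to $\overline{P}(q)$ times the correction factor $1+2\sum_{m=1}^{k}\sum_{j\ge 1}q^{2mj}$ (the paper's equation \eqref{eqth11}, which it then simplifies using $2\overline{p}(n)\equiv 0\pmod 4$ for $n\ge 1$ exactly as you do via the parity of $\overline{p}$), and then show the relevant count of pairs $(m,j)$ with $2mj=N$ is even. Your divisor-count formulation $e_k(N/2)=j$ (resp.\ $=k$) is the same computation the paper performs by listing which terms $q^{2n},q^{4n},\dots$ can produce $q^{\ell n+2^j}$ (resp.\ $q^{\ell n}$), so the two arguments coincide in substance.
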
 

 Next theorem gives a few examples of $4$ and $8$-rowed plane overpartition congruences modulo $8$. One may find more of this type using similar methods of proof. 
  
\begin{theorem}\label{ThmMod8}
	For all integer $n\geq 1,$
	\begin{align}\label{c11}
	\overline{pl}_{4}(12n)\equiv 0\pmod{8},
	\end{align}
	\begin{align}\label{c22}
	\overline{pl}_{4}(6n+3)\equiv 0\pmod{8},
	\end{align}
	\begin{align}\label{c33}
	\overline{pl}_{8}(210n)\equiv 0\pmod{8},
	\end{align}
	\begin{align}\label{c34}
	\overline{pl}_{8}(210n+3)\equiv 0\pmod{8},
	\end{align}
	\begin{align}\label{c35}
	\overline{pl}_{8}(210n+9)\equiv 0\pmod{8},
	\end{align}
	\begin{align}\label{c37}
	\overline{pl}_{8}(210n+105)\equiv 0\pmod{8}.
	\end{align}
\end{theorem}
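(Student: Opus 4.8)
The plan is to collapse each infinite product modulo $8$ to a \emph{finite} rational function and then read off the relevant coefficients by a low-order expansion. The engine is the elementary identity
\begin{equation*}
(1+q^n)^4-(1-q^n)^4=8\,(q^n+q^{3n}),
\end{equation*}
which shows that $\bigl(\tfrac{1+q^n}{1-q^n}\bigr)^{4}\equiv 1\pmod 8$ in $\mathbb{Z}/8\mathbb{Z}[[q]]$ (each $1-q^n$ is a unit). Writing $\overline{PL}_k(q)=\prod_{n\ge 1}\bigl(\tfrac{1+q^n}{1-q^n}\bigr)^{\min\{k,n\}}$ and using $4\mid k$ for $k\in\{4,8\}$, every factor with $n\ge k$ has exponent $k\equiv 0\pmod 4$, hence is $\equiv 1\pmod 8$; reducing the remaining exponents $n<k$ modulo $4$ yields
\begin{align*}
\overline{PL}_4(q)&\equiv \frac{1+q}{1-q}\cdot\frac{(1+q^2)^2}{(1-q^2)^2}\cdot\frac{1-q^3}{1+q^3}\pmod 8,\\
\overline{PL}_8(q)&\equiv \frac{1+q}{1-q}\cdot\frac{(1+q^2)^2}{(1-q^2)^2}\cdot\frac{1-q^3}{1+q^3}\cdot\frac{1+q^5}{1-q^5}\cdot\frac{(1+q^6)^2}{(1-q^6)^2}\cdot\frac{1-q^7}{1+q^7}\pmod 8.
\end{align*}

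Next I would isolate the even-index factors. Since $\bigl(\tfrac{1+q^{2t}}{1-q^{2t}}\bigr)^{2}\equiv 1+4\sum_{j\ge 0}q^{2t(2j+1)}\pmod 8$, the product of the even-index factors is $\equiv 1+4W_k(q)\pmod 8$, where $W_4$ is supported on exponents $\equiv 2\pmod 4$ and $W_8$ on exponents $\equiv 2\pmod 4$ together with $\equiv 6\pmod{12}$. Because the odd-index product $U_k(q)$ is $\equiv 1\pmod 2$, the whole correction is $\equiv 4W_k(q)\pmod 8$, supported only on those residues, and a direct check shows no target residue meets them: $12n\equiv 0\pmod 4$, $6n+3$ is odd, and $0,3,9,105\pmod{210}$ are all $\not\equiv 2\pmod 4$ and $\not\equiv 6\pmod{12}$. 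Hence on every progression in the theorem
\begin{equation*}
\overline{pl}_k(N)\equiv [q^N]\,U_k(q)\pmod 8,\qquad U_k(q)=\prod_{\substack{1\le n<k\\ n\ \mathrm{odd}}}\Bigl(\tfrac{1+q^n}{1-q^n}\Bigr)^{\varepsilon_n},
\end{equation*}
with $\varepsilon_n=+1$ for $n\equiv 1\pmod 4$ and $\varepsilon_n=-1$ for $n\equiv 3\pmod 4$.

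Finally I would expand $U_k$ to second order in $2$. Writing each factor as $1+2L_n$, with $L_n=\sum_{m\ge 1}q^{nm}$ when $\varepsilon_n=+1$ and $L_n=\sum_{m\ge 1}(-1)^m q^{nm}$ when $\varepsilon_n=-1$, and using $2^3\equiv 0\pmod 8$, one gets for $N>0$
\begin{equation*}
[q^N]\,U_k(q)\equiv 2A(N)+4B(N)\pmod 8,\qquad A(N)=\sum_n[q^N]L_n,\quad B(N)=\sum_{i<j}[q^N]L_iL_j.
\end{equation*}
Here $[q^N]L_n=\pm[\,n\mid N\,]$ with an explicit sign, so $A(N)$ is read off by a short divisibility-and-sign bookkeeping; since the quadratic term already carries a factor $4$, only $B(N)\bmod 2$ matters, and modulo $2$ the signs disappear, giving $B(N)\equiv\sum_{\{a,b\}}r_{a,b}(N)\pmod 2$, where $r_{a,b}(N)=\#\{(i,j)\in\mathbb{Z}_{\ge1}^{2}:ai+bj=N\}$. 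For $k=4$ there is a single pair, $r_{1,3}(N)=\lfloor (N-1)/3\rfloor$, and on the two target classes the pieces $2A(N)$ and $4B(N)$ combine to a multiple of $8$ by an elementary floor computation. For $k=8$ one finds $A(N)\equiv 0\pmod 4$ on all four classes, so the whole theorem rests on the parity of the six-term sum $B(N)=\sum_{\{a,b\}\subseteq\{1,3,5,7\}}r_{a,b}(N)$.

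I expect this last parity statement to be the main obstacle: proving that $\sum_{\{a,b\}\subseteq\{1,3,5,7\}}r_{a,b}(N)$ is even for $N\equiv 0,3,9,105\pmod{210}$. Each $r_{a,b}$ is, for coprime $a,b$, an explicit quasi-linear function of $N$ (Popoviciu's formula), so the sum is quasi-linear with period dividing $\mathrm{lcm}(ab)=210$; its parity can in principle be settled by evaluating on the relevant residues (I have checked, for instance, that $B(3)$, $B(9)$ are even). The real work is to organize the six contributions so the cancellations are transparent rather than a brute-force residue check.
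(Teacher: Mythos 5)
Your strategy is essentially the paper's: exploit $\bigl(\tfrac{1+q^n}{1-q^n}\bigr)^{4}\equiv 1\pmod 8$ to collapse the product to finitely many factors, expand $2$-adically as $1+2A+4B$, and reduce each congruence to counting representations $an+bm=N$. Your bookkeeping is tidier (replacing cube factors by inverses and splitting off the even-index part, versus the paper's direct expansion of $f(q)f(q^2)^2f(q^3)^3\cdots$), and your $k=4$ computation is complete and correct. However, there are two genuine gaps.

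First, the claim that the even-index correction $4W_k$ misses every target progression is false for $210n$: since $210\equiv 2\pmod 4$, the exponent $210n$ is $\equiv 2\pmod 4$ whenever $n$ is odd, so both $W_2$ (supported on $2\cdot\mathrm{odd}$) and $W_6$ (supported on $6\cdot\mathrm{odd}$) contribute to $[q^{210n}]$; you have conflated the residue $0$ with the residue class $0\pmod{210}$. The conclusion survives only because the two contributions sum to $4\cdot 2\equiv 0\pmod 8$, but that is a computation you must make, not a vacuous check. Second, and more seriously, the $k=8$ cases are left unfinished: you reduce everything to the parity of $B(N)=\sum_{\{a,b\}\subseteq\{1,3,5,7\}}r_{a,b}(N)$ on the four progressions and verify only $B(3)$ and $B(9)$. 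This is precisely where the paper does its real work: it enumerates the solutions of $an+bm=N$ pair by pair (using, e.g., that $3n+5m=210k+3$ forces $3\mid m$, via Lemma \ref{lemma(ab)} and direct listing), and for the one nontrivial pair $(5,7)$ it invokes Kwong's theorem (Lemma \ref{LemPart}) to get periodicity of $p(\cdot;\{5,7\})$ modulo $8$ with period $280$, reducing to four explicit evaluations. Your proposed route does close: each increment $r_{a,b}(N+210)-r_{a,b}(N)=210/(ab)\in\{70,42,30,14,10,6\}$ is even, so the parity of $B$ is $210$-periodic and it suffices to check $B(210)=166$, $B(3)$, $B(9)$, $B(105)=80$. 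But until those base cases are actually verified, the theorem is not proved; as written the proposal is an announced reduction, not a proof.
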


Next result gives a useful overpartition congruence modulo $8.$

\begin{theorem}\label{CongOverMod8}
The following holds for all nonsquare odd integers $n\geq 0,$
\begin{align*}
\overline{p}(n)\equiv 0\pmod{8}.
\end{align*}
\end{theorem}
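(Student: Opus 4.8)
The plan is to read off $\overline{p}(n)\pmod 8$ directly from the $2$-adic expansion \eqref{GenParOne}. Since $2^{k}\equiv 0\pmod 8$ for every $k\geq 3$, only the terms $k=1$ and $k=2$ survive modulo $8$, so for $n\geq 1$ we have
\begin{align*}
\overline{p}(n)\equiv 2(-1)^{n+1}c_{1}(n)+4(-1)^{n}c_{2}(n)\pmod 8,
\end{align*}
where $c_{k}(n)$ is the number of representations of $n$ as an ordered sum of $k$ positive squares. Specializing to odd $n$, so that $(-1)^{n+1}=1$ and $(-1)^{n}=-1$, gives $\overline{p}(n)\equiv 2c_{1}(n)-4c_{2}(n)\pmod 8$.

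First I would dispose of the $c_{1}$ term. The quantity $c_{1}(n)$ counts the ways of writing $n$ as a single positive square, so $c_{1}(n)=1$ when $n$ is a perfect square and $c_{1}(n)=0$ otherwise. Because the hypothesis excludes squares, $c_{1}(n)=0$, and the congruence collapses to $\overline{p}(n)\equiv -4c_{2}(n)\equiv 4c_{2}(n)\pmod 8$. Thus the theorem reduces to showing that $c_{2}(n)$ is even whenever $n$ is odd.

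The heart of the argument is then a short parity count. Here $c_{2}(n)$ counts ordered pairs $(a,b)$ of positive integers with $a^{2}+b^{2}=n$. Since $n$ is odd, exactly one of $a,b$ must be even and the other odd; in particular $a\neq b$ for every such representation. Hence the involution $(a,b)\mapsto(b,a)$ on the set of representations is fixed-point-free, pairing the representations off two at a time, so $c_{2}(n)$ is even. Combining this with the previous step yields $\overline{p}(n)\equiv 4c_{2}(n)\equiv 0\pmod 8$, as claimed.

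I do not expect a serious obstacle: the only points demanding care are the correct bookkeeping of the signs $(-1)^{n+k}$ when truncating \eqref{GenParOne} modulo $8$, and the observation that oddness of $n$ forces $a\neq b$, which is exactly what makes the pairing involution free of fixed points. Notably, the squareness hypothesis is used only to kill $c_{1}(n)$; the parity of $c_{2}(n)$ requires nothing beyond $n$ being odd.
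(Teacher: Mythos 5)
Your proof is correct, but it takes a different route from the paper's. The paper works from the Hirschhorn--Sellers factorization recorded in \eqref{GenConOver}, specializing to $k=3$ to get
\begin{align*}
\sum_{n=0}^{\infty}\overline{p}(n)q^{n}\equiv\phi(q)\,\phi(q^2)^2 \equiv 1+2\sum_{n\geq 1}{q^{n^2}}+4\sum_{n\geq 1}{q^{2n^2}}+4\sum_{n,m\geq 1}{q^{2(n^2+m^2)}}\pmod{8},
\end{align*}
and then simply observes that a nonsquare odd $n$ matches none of the exponents $m^2$, $2m^2$, $2(m^2+k^2)$, so its coefficient vanishes modulo $8$. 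You instead truncate the $2$-adic expansion \eqref{GenParOne} of $1/\phi(-q)$, reducing the claim to $\overline{p}(n)\equiv 2c_1(n)-4c_2(n)\pmod 8$ for odd $n$; killing $c_1(n)$ uses the nonsquare hypothesis, and you must then supply the extra combinatorial step that $c_2(n)$ is even for odd $n$, which your fixed-point-free involution $(a,b)\mapsto(b,a)$ handles correctly (oddness of $n=a^2+b^2$ forces $a\neq b$). The trade-off is that the paper's factorization pre-packages the exponents so that no parity count is needed, whereas your version makes the dependence on the two hypotheses (nonsquare kills the $2c_1$ term, oddness kills the $4c_2$ term) completely explicit, which is a nice structural bonus. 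Your sign bookkeeping from \eqref{GenParOne} is accurate, and your reading of $c_k(n)$ as counting ordered tuples matches the paper's derivation.
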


For $k$-rowed plane overpartitions with odd $k=5$, we obtain the following equivalence modulo $8$ for plane overpartitions with at most $5$ rows.

	\begin{theorem}\label{5rowed}
	The following holds for all $n\geq 0,$
	\begin{align}\label{Con1Pl5}
	\overline{pl}_{5}(12n+1)\equiv 	\overline{p}(12n+1)\pmod{8},
	\end{align}
	\begin{align}\label{ConPl5}
	\overline{pl}_{5}(12n+5)\equiv 	\overline{p}(12n+5)\pmod{8}.
	\end{align}
	\end{theorem}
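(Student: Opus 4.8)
The plan is to compare the two generating functions directly modulo $8$ by analyzing their ratio. By Lemma \ref{lemma3} with $k=5$,
\begin{align*}
\frac{\overline{PL}_{5}(q)}{\overline{P}(q)}=\prod_{n\geq 2}\left(\frac{1+q^{n}}{1-q^{n}}\right)^{\min\{5,n\}-1}.
\end{align*}
The engine of the whole argument is the observation that $(1+q^{n})^{4}-(1-q^{n})^{4}=8q^{n}+8q^{3n}$, so that $\left(\frac{1+q^{n}}{1-q^{n}}\right)^{4}\equiv 1\pmod{8}$ as a power series. This annihilates every factor whose exponent is $\geq 4$: the factors with $n\geq 5$ (exponent $4$) disappear, and the factor with $n=4$ (exponent $3$) collapses to $\left(\frac{1+q^{4}}{1-q^{4}}\right)^{-1}=\frac{1-q^{4}}{1+q^{4}}$. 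First I would record that, modulo $8$,
\begin{align*}
\frac{\overline{PL}_{5}(q)}{\overline{P}(q)}\equiv R(q):=\frac{1+q^{2}}{1-q^{2}}\cdot\left(\frac{1+q^{3}}{1-q^{3}}\right)^{2}\cdot\frac{1-q^{4}}{1+q^{4}},
\end{align*}
reducing an infinite product to three elementary factors.

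Next I would expand $R(q)-1$ modulo $8$ and split it according to the parity of the exponent of $q$. Using $\left(\frac{1+q^{3}}{1-q^{3}}\right)^{2}\equiv 1+4T_{3}\pmod{8}$ with $T_{3}=\sum_{k\geq 0}q^{6k+3}$, and noting that the factors built from $q^{2}$ and $q^{4}$ contribute only even powers, I would write $R(q)-1\equiv U+4T_{3}\pmod{8}$, where $U$ is supported on even exponents. The only feature of $U$ that I need is its reduction modulo $4$: a short computation gives $[q^{d}]U\equiv 2\pmod{4}$ when $d\equiv 2\pmod{4}$ and $[q^{d}]U\equiv 0\pmod{4}$ when $d\equiv 0\pmod 4$, while $[q^{d}]U=0$ for odd $d$.

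Then I would extract the coefficient of $q^{N}$ from $\overline{PL}_{5}(q)-\overline{P}(q)=\overline{P}(q)\bigl(R(q)-1\bigr)$ for $N\equiv 1,5\pmod{12}$, i.e.\ for $N\equiv 1\pmod 4$ with $3\nmid N$. For the odd part the contribution is $4\sum_{k\geq 0}\overline{p}(N-6k-3)$; each argument $N-6k-3$ is even, and it is never $0$ because $3\nmid N$, so every surviving $\overline{p}$-value is even by the $2$-adic expansion \eqref{GenParOne}, whence the contribution is $\equiv 0\pmod 8$. For the even part, $[q^{N}]\overline{P}(q)U=\sum_{d}[q^{d}]U\cdot\overline{p}(N-d)$ over even $d$. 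When $d\equiv 0\pmod 4$ we have $[q^{d}]U\equiv 0\pmod 4$ and $\overline{p}(N-d)$ even, so that term is $\equiv 0\pmod 8$; when $d\equiv 2\pmod 4$, then $N-d\equiv 3\pmod 4$ is an odd nonsquare, so $\overline{p}(N-d)\equiv 0\pmod 8$ by Theorem \ref{CongOverMod8}. Summing the two parts gives $\overline{pl}_{5}(N)\equiv\overline{p}(N)\pmod 8$ for $N\equiv 1,5\pmod{12}$.

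The main obstacle is controlling the even-degree correction $U$, which a priori could destroy the congruence. The resolution is the precise interplay of the two defining conditions on $N$: the constraint $N\equiv 1\pmod 4$ forces $N-d\equiv 3\pmod 4$ — hence a nonsquare — exactly on the residues $d\equiv 2\pmod 4$ where $U$ is nonzero modulo $4$, which is what lets Theorem \ref{CongOverMod8} apply; and $3\nmid N$ is exactly what keeps the argument of the odd part away from $0$, where $\overline{p}(0)=1$ is odd. These two requirements together are precisely $N\equiv 1,5\pmod{12}$, which explains both the modulus and the two residues in the statement.
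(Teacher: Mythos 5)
Your proposal is correct and follows essentially the same route as the paper: both reduce the generating function modulo $8$ via $f(q^n)^4\equiv 1\pmod 8$ (Lemma \ref{lem1}) to write $\overline{PL}_5(q)$ as $\overline{P}(q)$ times a short correction factor, and both close the argument with the evenness of $\overline{p}(n)$ together with $\overline{p}(m)\equiv 0\pmod 8$ for $m\equiv 3\pmod 4$ (the paper's Corollary \ref{OvCon4n+3}, which it deduces from Theorem \ref{CongOverMod8} exactly as you do). The only differences are organizational --- you replace $f(q^4)^3$ by $f(q^4)^{-1}=\frac{1-q^4}{1+q^4}$ and sort the correction by residue classes of the exponent, where the paper expands the three prefactors into explicit double sums and extracts the coefficient of $q^{12k+1}$ directly.
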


The rest of this paper will be organized as follows. In Section \ref{pre}, we review some preliminaries which are needed in the proofs of main the theorems including a useful theorem of Kwong \cite{kwong1} which we will apply to prove some of the identities in Theorem \ref{ThmMod8}. In Section \ref{MainProofs}, we present the proofs of the main results in this paper, and we give some applications for these results. In Section \ref{Remark},  we conclude with final remarks.  

\section{\textbf{Preliminaries}}\label{pre}

In this section, we shed light on the periodicity of a certain type of $q$-series, their minimum periodicity modulo integers and how to find such periodicity. Kwong and others have done extensive studies on the periodicity of certain rational functions, including partition generating functions, for example see  \cite{kwong3}, \cite{kwong1}, \cite{kwong2}, \cite{newman}, and \cite{nijenhuis1987}. We will apply a result of Kwong \cite{kwong1} that provides us a systematic formula to calculate the minimum periodicity modulo prime powers of such periodic series.

Let $$A(q)=\sum_{n=0}^{\infty}{\alpha(n)q^{n}} \in \mathbb{Z}[[q]]$$ be a formal power series with integer coefficients, and let $d, \ell$ and $\gamma$ be positive integers. We say $A(q)$ is {\it{periodic}} with period $d$ modulo $\ell$ if, for all $n\geq \gamma$,
$$\alpha(n+d)\equiv \alpha(n)\pmod{\ell}.$$
The smallest such period for $A(q)$, denoted $\pi_{\ell}(A)$, is called the {\it{minimum period of}} $A(q)$ modulo $\ell$. $A(q)$ is called {\it{purely periodic}} if $\gamma=0$. In this work, periodic always means purely periodic.

For example, consider the $q$-series $A(q)=\sum_{n\geq 0}{\alpha(n)q^n}$ which generates the sequence $\alpha(n):=4n+1$ for all $n\geq 0$. Note that $\alpha(n+2k)-\alpha(n)=8k\equiv 0\pmod{8}$ for all $n\geq 0$ and $k\geq 1$. Thus, $A(q)$ is periodic modulo $8$ and for each $k$, there is a period of length $2k$. Thus, the minimum period modulo $8$ is  $\pi_{8}(A)=2$.

 Before we state a result of Kwong \cite{kwong1}, we state some necessary definitions.\\
 For an integer $n$ and prime $\ell$, define $ord_{\ell}(n)$ to be the unique nonnegative integer such that $$\ell^{ord_{\ell}(n)}\cdot m=n,$$ where $m$ is an integer and $\ell\nmid m$. In addition, we call $m$ the {\it{$\ell$-free part}} of $n$.
 
  For a finite multiset of positive integers $S$, we define $m_{\ell}(S)$ to be the $\ell$-free part of $lcm\{n|n\in S\}$, and $b_{\ell}(S)$ to be the least nonnegative integer such that 
 $$\ell^{b_{\ell}(S)}\geq \sum_{n\in S}\ell^{ord_{\ell}(n)}.$$

We now state Kwong's theorem.

\begin{theorem}[Kwong,\cite{kwong1}]\label{kwong}
	Fix a prime $\ell$, and a finite multiset $S$ of positive integers. Then for any positive integer $N$, $$A(q)=\sum_{n=0}^{\infty} {p(n;S)q^{n}}$$ is periodic modulo $\ell^{N}$, with minimum period $$\pi_{\ell^{N}}(A)=\ell^{N+b_{\ell}(S)-1}\cdot m_{\ell}(S).$$
\end{theorem}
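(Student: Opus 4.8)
The plan is to translate the statement about the coefficient sequence into a polynomial divisibility statement over $\mathbb{Z}/\ell^{N}\mathbb{Z}$ and then read the minimum period off the multiplicities of roots of unity. Write $A(q)=1/D(q)$ with $D(q)=\prod_{s\in S}(1-q^{s})$; since $D(0)=1$ and the leading coefficient of $D$ is a unit, polynomial division by $D$ is well defined modulo $\ell^{N}$ and the sequence $p(n;S)$ satisfies a linear recurrence whose transition is invertible in both directions, so it is \emph{purely} periodic rather than merely eventually periodic. First I would record the elementary equivalence: $A$ is purely periodic modulo $\ell^{N}$ with period $d$ if and only if $(1-q^{d})A(q)=(1-q^{d})/D(q)$ reduces to a polynomial modulo $\ell^{N}$, that is, if and only if $D(q)\mid(1-q^{d})$ in $(\mathbb{Z}/\ell^{N}\mathbb{Z})[q]$. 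Because two pure periods $d_{1},d_{2}$ always admit $\gcd(d_{1},d_{2})$ as a pure period, a unique minimum period exists and divides every period, so it suffices to find the smallest $d$ with $D(q)\mid(1-q^{d})$.

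Next I would write $d=\ell^{\beta}m'$ with $\ell\nmid m'$ and, for each part, $s=\ell^{a_{s}}t_{s}$ with $\ell\nmid t_{s}$, and then analyze the divisibility one root of unity at a time. The prime-to-$\ell$ contribution is the separable part: primitive $e$-th roots of unity with $\ell\nmid e$ occur as simple roots of $1-q^{t_{s}}$, and forcing every such root of $D$ to be a root of $1-q^{\ell^{\beta}m'}$ requires $t_{s}\mid m'$ for all $s$, hence $\mathrm{lcm}\{t_{s}\}=m_{\ell}(S)\mid m'$; minimality pins the prime-to-$\ell$ part of the period to $m_{\ell}(S)$. The $\ell$-part carries all the subtlety. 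Here I would compute, for a fixed linear factor $(q-\zeta)$ with $\zeta$ a root of unity (the extremal case being $\zeta=1$), the exact power of $(q-\zeta)$ dividing $1-q^{M}$ modulo $\ell^{N}$ by substituting $q=\zeta(1+v)$ and invoking Kummer/Legendre for $\mathrm{ord}_{\ell}\binom{M}{j}$; this determines the multiplicity each side contributes at $\zeta$.

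Assembling these local multiplicities at the worst root $\zeta=1$ recovers the total multiplicity $\sum_{s}\ell^{\mathrm{ord}_{\ell}(s)}$ that $D$ contributes there modulo $\ell$, and the condition that $1-q^{\ell^{\beta}m_{\ell}(S)}$ absorb it is precisely the inequality $\ell^{\beta}\ge\sum_{s}\ell^{\mathrm{ord}_{\ell}(s)}$ defining $b_{\ell}(S)$; combining this with the modular refinement of the Frobenius identity $1-q^{\ell t}\equiv(1-q^{t})^{\ell}\pmod{\ell}$ produces the $N$-dependent shift and yields $\ell$-part $\ell^{N+b_{\ell}(S)-1}$, whence $\pi_{\ell^{N}}(A)=\ell^{N+b_{\ell}(S)-1}\,m_{\ell}(S)$. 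The hard part will be exactly this $\ell$-part bookkeeping: the clean collapse $1-q^{\ell^{a}t}\equiv(1-q^{t})^{\ell^{a}}$ holds only modulo $\ell$, so for $N\ge 2$ I must track how the repeated factors at $q=1$ and its conjugates interact through the $\ell$-adic valuations of binomial coefficients, and I must prove both directions — that the claimed $d$ achieves divisibility and, more delicately, that reducing the exponent below $N+b_{\ell}(S)-1$ leaves an undivided factor modulo $\ell^{N}$. Establishing this sharp minimality is the step I expect to demand the most care.
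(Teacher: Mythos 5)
You should first note that the paper does not prove this statement at all: it is quoted verbatim as a theorem of Kwong \cite{kwong1} and used as a black box (e.g.\ in Lemma \ref{LemPart} and in the proof of Theorem \ref{ThmMod8}). So there is no internal proof to compare against; your proposal has to stand on its own, and in fact the route you describe is essentially Kwong's: reduce periodicity to the divisibility $\prod_{s\in S}(1-q^{s})\mid(1-q^{d})$ in $(\mathbb{Z}/\ell^{N}\mathbb{Z})[q]$, use invertibility of the recurrence to upgrade eventual periodicity to pure periodicity, use gcd-closure of the set of periods to make ``minimum period'' meaningful, split $d$ into its $\ell$-part and $\ell$-free part, and observe that the $\ell$-free part is forced to be a multiple of $m_{\ell}(S)$ while the binding constraint for the $\ell$-part sits at $q=1$. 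All of that is correct and well organized.

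The genuine gap is the one you yourself flag and then do not close: the exact statement and proof of the local lemma at $q=1$, namely that for $\beta\geq N-1$ one has $(1-q)^{k}\mid(1-q^{\ell^{\beta}m'})$ in $(\mathbb{Z}/\ell^{N}\mathbb{Z})[q]$ if and only if $k\leq\ell^{\beta-N+1}$, together with the assertion that the condition $D(q)\mid(1-q^{d})$ is governed by comparing $\ell^{\beta-N+1}$ with $\sum_{s}\ell^{\operatorname{ord}_{\ell}(s)}$. Without this, the exponent $N+b_{\ell}(S)-1$ is not derived but merely matched to the target. Moreover, your phrase ``assembling these local multiplicities'' hides a real difficulty: for $N\geq 2$ the ring $(\mathbb{Z}/\ell^{N}\mathbb{Z})[q]$ is not a UFD, so you cannot literally factor $1-q^{d}$ into linear factors over an extension and add up multiplicities at each root of unity; the congruence $1-q^{\ell t}\equiv(1-q^{t})^{\ell}$ holds only modulo $\ell$, exactly as you note. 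The standard repair is either to separate $1-q^{d}$ into pairwise coprime Hensel-lifted factors indexed by the $\ell$-free divisors of $d$ and prove the sharp divisibility bound only for the factor supported at $q=1$ (where an explicit computation with $\binom{\ell^{\beta}}{j}$ and Legendre's formula does the job, in both the sufficiency and the minimality direction), or to prove directly that passing from modulus $\ell$ to modulus $\ell^{N}$ multiplies the minimum period by exactly $\ell^{N-1}$ and then settle the case $N=1$ over the field $\mathbb{F}_{\ell}$, where your multiplicity argument is literally valid. Either way, that lemma is the entire content of the theorem, so your writeup cannot leave it as a promissory note.
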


For example, let $S=\{1_{1},1_{2}, 2_{1},2_{2},2_{3},4_{1},4_{2},5\}.$ Then $p(n;S)$ is generated by the following $q$-series
\begin{align*}
A(q):=\sum_{n=0}^{\infty}{p(n;S)q^n}=\prod_{n\in S}{\frac{1}{(1-q^n)}}=\frac{1}{(1-q)^2(1-q^2)^3(1-q^4)^2(1-q^5)}.
\end{align*}
Letting $\ell=2$ in Theorem \ref{kwong}, we obtain 
\begin{align*}
2^{b_{2}(S)}\geq \sum_{n\in S} 2^{ord_{2}(n)}=2\cdot 2^{0}+3\cdot 2^{1}+2\cdot 2^2+ 2^0=17.
\end{align*}
Thus $b_{2}(S)=5, \;lcm \{n: n\in S\}= 20$, and hence $m_{2}(S)= 5.$ Using Theorem \ref{kwong}, for a positive integer $N$, the minimum period of $A(q)$ modulo $2^N$ is $\pi_{2^N}(A)=2^{N+4}\cdot 5.$

Theorem \ref{kwong} was used by the author in \cite{ali1} to prove a method to obtain various partition theoretic congruences by verifying they hold for a finite number of values. This work generalized a result of Mizuhara, Sellers, and Swisher \cite{periodic}.  
 
The following lemma has a flavor of periodicity of restricted partitions. It is an application of Theorem \ref{kwong} and will be used in the proof of Theorem \ref{ThmMod8}

\begin{lemma}\label{LemPart}
	Let  $a,b,c\geq 2$ be integers such that $a,b$ and $c$ are pairwise relatively prime. Let $M_{c}$ be the number of pairs of positive integers $(n,m)\in \mathbb{N}^2$ with $an+bm=c$ where $M_{c}:=0$ if no such pairs exists. Then,
	\begin{align*}
     M_{c}=p(c;\{a,b\}),
	\end{align*}
	where $p(c;\{a,b\})$ is the number of partitions of $c$ into parts from the set $\{a,b\}$. Moreover, for every integer $N\geq 1$ and a prime $\ell$,
	\begin{align*}
    M_{c+\pi_{\ell^N}}\equiv M_{c}\pmod{\ell^N},
   	\end{align*}
   	where $\pi_{\ell^N}$ is the minimum period modulo $\ell^N$ of the $q$-series 
   	\begin{align}\label{qser}
\sum_{n=0}^{\infty}p(n;\{a,b\})q^n=\frac{1}{(1-q^a)(1-q^b)}
   	\end{align}
   	 which  generates the partitions with parts from $\{a,b\}$.
\end{lemma}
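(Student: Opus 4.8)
The plan is to prove the two assertions of Lemma~\ref{LemPart} separately, since the first is a purely combinatorial identity and the second is an immediate consequence of Kwong's theorem (Theorem~\ref{kwong}). For the first claim, I would establish a bijection between the set of pairs $(n,m)\in\mathbb{N}^2$ with $an+bm=c$ and the set of partitions of $c$ into parts from the multiset $\{a,b\}$. Given such a pair, associate to it the partition consisting of $n$ copies of $a$ together with $m$ copies of $b$; this is a valid partition of $c$ since $an+bm=c$ and all parts lie in $\{a,b\}$. Conversely, any partition of $c$ with parts from $\{a,b\}$ uses some number $n\geq 1$ of $a$'s and $m\geq 1$ of $b$'s summing to $c$. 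The one subtlety is that Lemma~\ref{LemPart} counts \emph{positive} integer pairs (so $n,m\geq 1$), whereas $p(c;\{a,b\})$ as defined by the generating function in \eqref{qser} would a priori also count partitions using only one of the two part sizes (i.e.\ allowing $n=0$ or $m=0$). I would therefore note that the hypothesis $\gcd(a,b)=1$ together with $a,b\geq 2$ forces both coordinates to be positive in every genuine representation relevant here, or else handle the boundary cases directly; this is the step that needs the most care.

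To make the correspondence precise, I would observe that $M_c$ counts exactly those partitions using at least one part equal to $a$ and at least one equal to $b$, and then argue that when $a\nmid c$ and $b\nmid c$ (which is the regime in which these pairs arise in the applications), every partition of $c$ into parts from $\{a,b\}$ must use both sizes, so that $M_c=p(c;\{a,b\})$ with no correction term. Since $a,b,c$ are pairwise relatively prime with $a,b,c\geq 2$, we have $a\nmid c$ and $b\nmid c$ automatically, which is precisely why the hypotheses of the lemma are stated this way. This verifies the identity $M_c=p(c;\{a,b\})$.

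For the second claim, the generating function in \eqref{qser} is exactly $\sum_{n\geq 0} p(n;S)q^n$ for the multiset $S=\{a,b\}$, so Theorem~\ref{kwong} applies directly: the series is periodic modulo $\ell^N$ with some minimum period $\pi_{\ell^N}$, meaning $p(c+\pi_{\ell^N};\{a,b\})\equiv p(c;\{a,b\})\pmod{\ell^N}$ for all $c$. Combining this with the identity $M_c=p(c;\{a,b\})$ just established gives $M_{c+\pi_{\ell^N}}\equiv M_c\pmod{\ell^N}$, as desired. The main obstacle is not any deep argument but rather the bookkeeping in the first part, namely reconciling the ``positive integer pairs'' count defining $M_c$ with the possibly-larger count furnished by the generating function, and confirming that the coprimality hypotheses eliminate the discrepancy; once that is settled, the periodicity transfers for free from Kwong's theorem.
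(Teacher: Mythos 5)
Your proof is correct and follows essentially the same route as the paper: identify pairs $(n,m)$ with partitions of $c$ into $n$ copies of $a$ and $m$ copies of $b$, use the pairwise coprimality (which forces $a\nmid c$ and $b\nmid c$) to rule out partitions using only one part size, and then invoke Theorem \ref{kwong} for the periodicity. Your treatment of the boundary cases ($n=0$ or $m=0$) is in fact slightly more explicit than the paper's.
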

\begin{proof}
	Note that if there are two positive integers $n$ and $m$ such that $an+bm=c$, then $c$ can be partitioned into parts form $\{a,b\}$ as follows 
	\begin{align*}
    \underbrace{a+\dots+a}_{n\text{-times}}\vphantom{1}+\underbrace{b+\dots+b}_{m\text{-times}}\vphantom{1}=c.
	\end{align*} 
	Thus, any pair of positive integers $n$ and $m$ that satisfy $an+bm=c$ corresponds to a partition of $c$ into parts from $\{a,b\}$. Likewise, since $gcd(a,b)=gcd(a,c)=gcd(b,c)=1,$ then any such partition of $c$ must involve both $a$ and $b$, and hence any corresponding integers $n$ and $m$ must be positive. By considering all such pairs $(n,m)$, we then obtain
	\begin{align*}
	  M_{c}=p(c;\{a,b\}).
	\end{align*}
	By Theorem \ref{kwong}, the $q$-series \eqref{qser} is periodic modulo $\ell^N$ for any integer $N\geq 1$ and a prime $\ell$, with minimum period $\pi_{\ell^N}=\ell^{N+b_{\ell}(\{a,b\})-1}\cdot m_{\ell}(\{a,b\})$ which yields that
	\begin{align*}
    M_{c+\pi_{\ell^N}}=p(c+\pi_{\ell^N};\{a,b\})\equiv p(c;\{a,b\})= M_{c}\pmod{\ell^N}.
	\end{align*}
\end{proof}

Also, the following lemma will be used in the proof of Theorem \ref{ThmMod8}.
 
 \begin{lemma}\label{lemma(ab)}
 	Let $a,b,c \in \mathbb{N}$ such that $gcd(a,b)=1$. Then there are $c-1$ pairs of positive integers $(n,m)$ such that $an+bm=abc$.
 \end{lemma}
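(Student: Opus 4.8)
The plan is to treat $an+bm=abc$ as a linear Diophantine equation and to count its positive solutions directly by parametrizing the full integer solution set, rather than invoking any of the periodicity machinery.

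First I would record one obvious integer solution. Since $a(bc)+b\cdot 0 = abc$, the pair $(n_0,m_0)=(bc,0)$ is a solution. Because $\gcd(a,b)=1$, the homogeneous equation $an+bm=0$ has integer solutions exactly $(n,m)=(bt,-at)$ for $t\in\mathbb{Z}$, so every integer solution of $an+bm=abc$ can be written as
\[
n = bc + bt, \qquad m = -at, \qquad t\in\mathbb{Z}.
\]
Here coprimality is what guarantees the step sizes are precisely $b$ and $a$, so no solution is missed and none is counted twice.

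Next I would impose the positivity conditions $n\geq 1$ and $m\geq 1$. From $m=-at\geq 1$ we get $t\leq -1$, and from $n=b(c+t)\geq 1$ we get $t\geq -(c-1)$. Hence the admissible values of $t$ are exactly the integers with $-(c-1)\leq t\leq -1$, namely $t\in\{-1,-2,\dots,-(c-1)\}$, of which there are $c-1$. Each such $t$ yields a distinct pair $(n,m)$ of positive integers, giving the count $c-1$ claimed in the statement.

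The only place that requires care—rather than being genuinely hard—is the strict positivity at the two boundary values: $t=0$ produces the degenerate pair $(bc,0)$ and $t=-c$ produces $(0,ac)$, both of which must be excluded because one coordinate vanishes. The coprimality of $a$ and $b$ is precisely what keeps the parametrization clean, so that these two are the only boundary cases and the interior count comes out to exactly $c-1$. No deeper tools (such as Theorem~\ref{kwong} or Lemma~\ref{LemPart}) are needed for this lemma itself; it is the elementary input that supplies the pair counts used together with Lemma~\ref{LemPart} in the proof of Theorem~\ref{ThmMod8}.
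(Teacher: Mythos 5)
Your proof is correct and is essentially the argument the paper gives: the paper uses $\gcd(a,b)=1$ to deduce $b\mid n$ and $a\mid m$, writes $n=bN$, $m=aM$, and counts the $c-1$ solutions of $N+M=c$ in positive integers, which is exactly your parametrization $n=b(c+t)$, $m=-at$ with $t=-M$ in different clothing. Both routes hinge on the same use of coprimality and end with counting $c-1$ integers in an interval, so there is nothing to add.
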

 \begin{proof}
 	Suppose that $an+bm=abc$. Then $an=abc-bm$ and so $b|an$, and since $gcd(a,b)=1$, we must have $b|n$. So $n=bN$ for some $N\in \mathbb{N}$. Similarly, $a|m$ and so $m=aM$ for some $M\in \mathbb{N}$. We see then that $abN+abM=abc$ and thus $N+M=c$. Hence, if $(n,m)\in \mathbb{N}^2$ satisfies $an+bm=abc$, then it is equivalent to say there is a pair $(N,M)\in \mathbb{N}^2 $ such that $N+M=c$. Note that there are $c-1$ pairs $(N,M)\in \mathbb{N}^2$ such that $N+M=c$ since the possible ways are $1+(c-1), 2+(c-2),\dots,(c-1)+1$.
 \end{proof}
 
We define throughout the formal power series 
 \begin{align}\label{eqf}
 f(q):=\frac{1+q}{1-q}=1+2q+2q^2+2q^3+\cdots.
 \end{align}
 Note that for every positive integer $n\geq 1$,
 \begin{align*}
 f(q^n)\equiv \frac{1-q^n}{1-q^n} \equiv 1\pmod{2}.
 \end{align*} 
 Thus, we obtain
 \begin{align*}
 \sum_{n=0}^{\infty}{\overline{pl}(n)q^n}=\prod_{n=1}^{\infty} \frac{(1+q^n)^{n}}{(1-q^n)^{n}}
  =\prod_{n=1}^{\infty}{f(q^n)^n}\equiv 1\pmod{2},
 \end{align*}
 and
  \begin{equation}
 \sum_{n=0}^{\infty}{\overline{pl}_{k}(n)q^{n}}
  =\prod_{n=1}^{\infty}{\frac{(1+q^{n})^{\text{min}\{k,n\}}}{(1-q^{n})^{\text{min}\{k,n\}}}}=\prod_{n=1}^{\infty}f(q^n)^{\text{min}\{k,n\}}\equiv 1\pmod{2}.
  \end{equation}

 \begin{lemma}\label{MainLemma}
 	For all $k\geq 1$,
 	\begin{align}\label{LemEq}
 	\left(1+2S(q)\right)^{2^k}&\equiv 1\pmod{2^{k+1}},
 	\end{align}	
 where $S(q)\in \mathbb{Z}[[q]]$ is a $q$-series with integer coefficients.
 \end{lemma}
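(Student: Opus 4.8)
The plan is to prove the congruence $(1+2S(q))^{2^k}\equiv 1\pmod{2^{k+1}}$ by induction on $k$. For the base case $k=1$, I would simply expand
\[
(1+2S(q))^2 = 1 + 4S(q) + 4S(q)^2 = 1 + 4\bigl(S(q)+S(q)^2\bigr),
\]
and since $S(q)+S(q)^2\in\mathbb{Z}[[q]]$, the right-hand side is congruent to $1$ modulo $4=2^{k+1}$, as required.

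For the inductive step, I would assume the statement holds for some $k\geq 1$, so that $(1+2S(q))^{2^k}=1+2^{k+1}T(q)$ for some $T(q)\in\mathbb{Z}[[q]]$. Then I would square both sides to reach $2^{k+1}$:
\[
(1+2S(q))^{2^{k+1}} = \bigl(1+2^{k+1}T(q)\bigr)^2 = 1 + 2^{k+2}T(q) + 2^{2k+2}T(q)^2.
\]
Here $2^{k+2}T(q)\equiv 0\pmod{2^{k+2}}$, and since $2k+2\geq k+2$ for all $k\geq 1$ (indeed $2k+2-(k+2)=k\geq 0$), the last term $2^{2k+2}T(q)^2$ is also divisible by $2^{k+2}$. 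Hence $(1+2S(q))^{2^{k+1}}\equiv 1\pmod{2^{k+2}}$, which completes the induction.

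The argument is entirely elementary and I do not anticipate a genuine obstacle; the only point requiring a moment's care is verifying the exponent inequality $2k+2\geq k+2$ in the inductive step, which guarantees that squaring the error term $2^{k+1}T(q)$ does not fall short of the target modulus $2^{k+2}$. One should also note that the coefficientwise reduction modulo a power of $2$ is well defined for formal power series with integer coefficients, so all congruences above are to be read coefficient by coefficient in $\mathbb{Z}[[q]]$.
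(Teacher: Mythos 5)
Your proof is correct and is essentially identical to the paper's: both argue by induction on $k$, writing $(1+2S(q))^{2^{k}}=1+2^{k+1}T(q)$ from the inductive hypothesis and then squaring, with the only difference being that you step from $k$ to $k+1$ while the paper steps from $k-1$ to $k$. No further comparison is needed.
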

 \begin{proof}
 We induct on $k$. It is easy to see that \eqref{LemEq} is true for $k=1.$ Now suppose that \eqref{LemEq} is true for $1\leq j\leq k-1$. Then by induction there is a $q$-series $T(q)\in \mathbb{Z}[[q]]$ such that $\left(1+2S(q)\right)^{2^{k-1}}=1+2^{k}T(q).$ Thus,
 \begin{align*}
 \left(1+2S(q)\right)^{2^k}=&\left((1+2S(q))^{2^{k-1}}\right)^{2}\\
 &=\left(1+2^kT(q)\right)^{2}\\
 &\equiv 1\pmod{2^{k+1}},
 \end{align*}
 as desired.
 \end{proof}
 
 The following lemma is a very useful tool in the proofs of the main results.
 \begin{lemma}\label{lem1}
 	For all integers $n,k\geq 1$,
 	\begin{align*}
 	f(q^n)^{2^{k}}\equiv 1\pmod{2^{k+1}}.
 	\end{align*}	
 \end{lemma}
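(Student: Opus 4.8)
The plan is to prove $f(q^n)^{2^k}\equiv 1 \pmod{2^{k+1}}$ by reducing it to the already-established Lemma~\ref{MainLemma}. The key observation is that $f(q^n)$ is congruent to $1$ modulo $2$, so we may write it in the form $1 + 2S(q)$ for a suitable integer power series $S(q)$, and then apply the previous lemma directly. First I would recall from \eqref{eqf} that
\begin{align*}
f(q^n)=\frac{1+q^n}{1-q^n}=1+2q^n+2q^{2n}+2q^{3n}+\cdots,
\end{align*}
which shows that $f(q^n)-1$ is an element of $2\mathbb{Z}[[q]]$; that is, every coefficient beyond the constant term is even.

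Next I would set $S(q):=q^n+q^{2n}+q^{3n}+\cdots=\dfrac{q^n}{1-q^n}\in\mathbb{Z}[[q]]$, so that $f(q^n)=1+2S(q)$ exactly, with $S(q)$ having integer coefficients. This is the only structural input needed. With this identification in hand, the result is immediate: Lemma~\ref{MainLemma} asserts that $(1+2S(q))^{2^k}\equiv 1\pmod{2^{k+1}}$ for \emph{any} integer power series $S(q)$, so substituting our particular $S$ gives
\begin{align*}
f(q^n)^{2^k}=\left(1+2S(q)\right)^{2^k}\equiv 1\pmod{2^{k+1}},
\end{align*}
as claimed. The statement is required for all $n\geq 1$ and all $k\geq 1$, but the dependence on $n$ is entirely absorbed into the choice of $S(q)$, and the induction on $k$ has already been carried out inside Lemma~\ref{MainLemma}.

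There is essentially no obstacle here: the lemma is a one-line corollary of Lemma~\ref{MainLemma} once one rewrites $f(q^n)$ in the normalized form $1+2S(q)$. The only thing worth double-checking is that $S(q)$ genuinely lies in $\mathbb{Z}[[q]]$ (rather than merely $\tfrac12\mathbb{Z}[[q]]$), which is clear since $f(q^n)-1$ has all even coefficients by the explicit geometric-series expansion above. I would therefore present this as a short deduction rather than a standalone induction, since reproving the binomial/squaring argument would only duplicate the work already done in establishing \eqref{LemEq}.
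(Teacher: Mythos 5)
Your proof is correct and follows essentially the same route as the paper: the paper writes $f(q^n)=1+\frac{2q^n}{1-q^n}=1+2S(q^n)$ with $S(q)=\sum_{m\geq 1}q^m$ and then invokes Lemma~\ref{MainLemma}, which is exactly your deduction with $S$ relabeled. No gaps.
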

 \begin{proof}	Let $S(q):=\sum_{m\geq 1}q^m$. We observe that $S(q)=\frac{q}{1-q}$, and so 
 	\begin{align}\label{eqf2}
 	f(q^n)=\frac{1+q^n}{1-q^n}=1+\frac{2q^n}{1-q^n}=1+2S(q^n).
 	\end{align}
 	The conclusion then follows by Lemma \ref{MainLemma}.
 \end{proof}
 
 Overpartition congruences modulo small powers of $2$ can be derived from the following fact proved by Hirschhorn and Sellers  [\cite{hirschhorn2006arithmetic}, Theorem $2.1$] which states
 \begin{align}\label{PhiThm}
 \overline{P}(q)=\phi(q)\overline{P}(q^2)^2.
 \end{align}
 Iterating \eqref{PhiThm} yields that [\cite{hirschhorn2006arithmetic}, Theorem $2.2$]
 \begin{align*}
 \overline{P}(q)=\phi(q)\;\phi^2(q^2)\;\phi^4(q^4)\;\phi^8(q^8)\cdots.
 \end{align*}
 Thus, 
 \begin{align*}
 \sum_{n=0}^{\infty}\overline{p}(n)q^{n}=\left(1+2\sum_{n\geq 1}{q^{n^2}}\right) \left(1+2\sum_{n\geq 1}{q^{2n^2}}\right)^2 \left(1+2\sum_{n\geq 1}{q^{4n^2}}\right)^4 \left(1+2\sum_{n\geq 1}{q^{8n^2}}\right)^8\cdots
 \end{align*}
 By Lemma \ref{MainLemma}, we observe that for all $k\geq 1,$
 \begin{align}\label{GenParTwo}
 \phi(q)^{2^k}\equiv 1\pmod{2^{k+1}}.
 \end{align}
 Thus, by \eqref{GenParOne} and \eqref{GenParTwo}, we obtain the following general equivalence modulo $2^{k},$ for $k\geq 2,$
 \begin{align}\label{GenConOver}
 \sum_{n=0}^{\infty}\overline{p}(n)q^{n}\equiv \prod_{j=0}^{k-2}\left(\phi(q^{2^j})\right)^{2^j}\equiv 1+\sum_{j=1}^{k-1}{2^j}\sum_{n=1}^{\infty}{(-1)^{n+j}c_{j}(n)q^n}\pmod{2^{k}},
 \end{align}
 which for the case $k=2$, we obtain
 \begin{align}\label{OverPartMod2}
 \sum_{n=0}^{\infty}\overline{p}(n)q^{n}\equiv\phi(q)\equiv 1+2\sum_{n=1}^{\infty}{q^{n^2}}\pmod{4},
 \end{align}
 which yields for each nonsquare integer $n\geq 1,$
 \begin{align}\label{NonSqMod4}
 \overline{p}(n)\equiv 0\pmod{4}.
 \end{align}
 
 Manipulating the generating function $\overline{P}(q)$ of overpartitions, Hirschhorn and Sellers \cite{hirschhorn2005arithmetic} employed elementary dissection techniques of generating functions and derived a set of overpartition congruences modulo small powers of $2$. For example, they prove that for all $n\geq 0,$
 \begin{align*}
 &\overline{p}(9n + 6)\equiv 0\pmod{8},\\
 &\overline{p}(8n+7)\equiv 0\pmod{64}.
 \end{align*}
 
 For a modulus that is not a power of $2$, Hirschhorn and Sellers \cite{hirschhorn} prove the first infinite family of congruences for $\overline{p}(n)$ modulo $12$ by showing first that for all $n\geq 0,$ and all $\alpha \geq 0,$
 \begin{align*}
 \overline{p}(9^\alpha(27n+18))\equiv 0\pmod{3}.
 \end{align*}
 Together with the fact $9^\alpha(27n+18)$  is nonsquare for all $n\geq 0, \alpha \geq 0,$ and hence by the help of \eqref{NonSqMod4}, it follows that for all $\alpha, n\geq 0$,
 \begin{align}\label{seller1}
 \overline{p}(9^\alpha(27n+18))\equiv 0\pmod{12}.
 \end{align}
 
 Several examples of overpartition congruences have been found. For more examples of overpartition congruences, one may refer to work of Chen and Xia \cite{chen2013proof}, Fortin, Jacob and Mathieu \cite{fortin2005jagged}, Treneer \cite{treneer2006congruences} and Wang \cite{wang2014another}.
 
 Now, let $\overline{p}_{o}(n)$ denote the number of overpartitions of $n$ into odd parts. The generating function for $\overline{p}_{o}(n)$ \cite{hirschhorn2006arithmetic} is given by 
 \begin{align}\label{OverOddGen}
 \overline{P}_{o}(q):=\sum_{n=0}^{\infty}{\overline{p}_{o}(n)q^n}=\prod_{n=1}^{\infty}\frac{1+q^{2n-1}}{1-q^{2n-1}}.
 \end{align}
 
 Similar to \eqref{PhiThm}, The generating function $\overline{P}_{o}(q)$ can be written as [see \cite{hirschhorn2006arithmetic},Theorem $2.3$], 
 \begin{align}\label{PhiOdd}
 \overline{P}_{o}(q)=\phi(q)\overline{P}(q^2),
 \end{align}
 and the iteration of \eqref{PhiOdd} yields [\cite{hirschhorn2006arithmetic},Theorem $2.4$],
 \begin{align}
 \overline{P}_{o}(q)=\phi(q) \phi(q^2) \phi^2(q^4) \phi^4(q^8)\cdots
 \end{align}
 For modulus $4$, we then easily get
 \begin{align*}
 \sum_{n=0}^{\infty}\overline{p}_{o}(n)q^{n}\equiv \phi(q)\phi(q^2)\equiv 1+2\sum_{n\geq 1}{q^{n^2}}+2\sum_{n\geq 1}{q^{2n^2}}\pmod{4}. 
 \end{align*}
 As a consequence, Hirschhorn and Sellers obtain Theorem $2.3$ of \cite{hirschhorn2006arithmetic} as following.
 \begin{theorem}[Hirschhorn, Sellers, \cite{hirschhorn2006arithmetic}]\label{OverOddGenTh}
 For every integer $n\geq 1,$
 \[   
 	\overline{p}_{o}(n)\equiv
 	\begin{cases}
 	2\pmod{4} &\text{if $n$ is a square or twice a square},\\
 	0\pmod{4} &\text{otherwise}. \ 
 	\end{cases}
 	\]
 \end{theorem}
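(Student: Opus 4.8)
The plan is to read the congruence off directly from the modulo-$4$ reduction of $\overline{P}_o(q)$, for which the heavy lifting is already available in the factorization
$$\overline{P}_o(q)=\phi(q)\,\phi(q^2)\,\phi^2(q^4)\,\phi^4(q^8)\cdots$$
obtained by iterating \eqref{PhiOdd}. First I would invoke \eqref{GenParTwo}, namely $\phi(q)^{2^k}\equiv 1\pmod{2^{k+1}}$, to discard every factor from the third onward. For $j\geq 2$ the factor appearing is $\phi(q^{2^j})^{2^{j-1}}$, whose exponent $2^{j-1}$ is at least $2$; applying \eqref{GenParTwo} with $k=j-1$ (and $q$ replaced by $q^{2^j}$, which preserves coefficient congruences) gives $\phi(q^{2^j})^{2^{j-1}}\equiv 1\pmod{2^{j}}$, hence $\equiv 1\pmod 4$. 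This collapses the infinite product to $\overline{P}_o(q)\equiv \phi(q)\,\phi(q^2)\pmod 4$.

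Next I would substitute $\phi(q)=1+2\sum_{n\geq 1}q^{n^2}$ and $\phi(q^2)=1+2\sum_{n\geq 1}q^{2n^2}$ and multiply out. The lone cross term carries a factor of $4$ and so vanishes modulo $4$, leaving
$$\sum_{n=0}^{\infty}\overline{p}_o(n)q^n\equiv 1+2\sum_{n\geq 1}q^{n^2}+2\sum_{n\geq 1}q^{2n^2}\pmod 4.$$
Extracting the coefficient of $q^n$ for $n\geq 1$, a contribution of $2$ arises precisely when $n$ is a perfect square, and a further contribution of $2$ arises precisely when $n$ is twice a perfect square; all other $n$ receive no contribution and satisfy $\overline{p}_o(n)\equiv 0\pmod 4$.

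The one point requiring care, and the only step beyond routine expansion, is verifying that the two potential contributions of $2$ never combine to give $4\equiv 0$ on the ``square or twice a square'' branch. This reduces to the observation that no positive integer is simultaneously a perfect square and twice a perfect square: an equality $a^2=2b^2$ with $a,b\geq 1$ would force $\sqrt 2=a/b$ to be rational, a contradiction. Hence the index sets $\{n^2\}_{n\geq 1}$ and $\{2n^2\}_{n\geq 1}$ are disjoint, so every $n$ that is a square or twice a square receives exactly one contribution of $2$, yielding $\overline{p}_o(n)\equiv 2\pmod 4$, while every other $n\geq 1$ gives $\overline{p}_o(n)\equiv 0\pmod 4$. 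This is exactly the stated dichotomy, so the result follows once the disjointness remark is in place.
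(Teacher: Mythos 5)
Your proposal is correct and follows essentially the same route as the paper: iterating $\overline{P}_{o}(q)=\phi(q)\overline{P}(q^2)$ to get the infinite product, discarding all factors beyond $\phi(q)\phi(q^2)$ via $\phi(q)^{2^k}\equiv 1\pmod{2^{k+1}}$, and reading the coefficients off the resulting expansion $1+2\sum q^{n^2}+2\sum q^{2n^2}$ modulo $4$. Your explicit check that no positive integer is both a square and twice a square is a detail the paper leaves implicit, and it is a worthwhile addition.
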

 
  Similar to \eqref{GenConOver}, we have the following general equivalence modulo $2^{k}$ for all $k\geq 2,$
 \begin{align*}
 \sum_{n=0}^{\infty}\overline{p}_{o}(n)q^{n}\equiv \phi(q)\; \phi(q^2)\; \phi^2(q^4)\cdots \phi(q^{2^{k-1}})^{2^{k-2}}\pmod{2^{k}}.
 \end{align*}
 
  Later, we will revisit the equivalences \eqref{GenConOver}, \eqref{NonSqMod4}, and Theorem \ref{OverOddGenTh}.

\section{\textbf{Proofs of Main Results and Some Corollaries}}\label{MainProofs}
We now present proofs of our main results stated in Section \ref{MainResultSec}. In addition, we give several corollaries.

\begin{proof}[\textbf{Proof of Theorem \ref{PlOverPaTh}}]
We observe by \eqref{GenPl} and Lemma \ref{lem1}, the generating function for plane overpartitions
\begin{align*}
\sum_{n=0}^{\infty}{\overline{pl}(n)q^n}=&\prod_{n=1}^{\infty} \frac{(1+q^n)^{n}}{(1-q^n)^{n}}=\prod_{n=1}^{\infty}f(q^n)^n\\
 =&\prod_{n=1}^{\infty}f(q^{2n})^{2n}f(q^{2n-1})^{2n-1}\\
 \equiv& \prod_{n=1}^{\infty}f(q^{2n-1})\pmod{4}\\
 =&\prod_{n=1}^{\infty}\frac{(1+q^{2n-1})}{(1-q^{2n-1})}\pmod{4}\\
 =& \sum_{n=0}^{\infty}{\overline{p}_{o}(n)q^{n}}\pmod{4}.
\end{align*}
Thus for all $n\geq 1,$
\begin{align*}
\overline{pl}(n)\equiv \overline{p}_{o}(n)\pmod{4}.
\end{align*}
By Theorem \ref{OverOddGenTh}, for all $n\geq 1,$
\[   
	\overline{p}_{o}(n)\equiv
	\begin{cases}
	2\pmod{4} &\text{if $n$ is a square or  twice a square},\\
	0\pmod{4} &\text{otherwise}, \ 
	\end{cases}
	\]
and the result follows.
\end{proof}

\begin{corollary}
The following holds for all $n\geq 0,$
\begin{align*}
\overline{pl}(4n+3)\equiv 0\pmod{4}.
\end{align*}
\end{corollary}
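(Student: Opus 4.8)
The plan is to deduce the corollary immediately from Theorem \ref{PlOverPaTh}, which has just been established. That theorem gives a complete description of $\overline{pl}(n) \pmod{4}$: the value is $2 \pmod 4$ precisely when $n$ is a perfect square or twice a perfect square, and $0 \pmod 4$ in every other case. Thus all the work reduces to a single residue check, namely that no integer of the form $4n+3$ can be a square or twice a square. Once that is verified, every such $n$ lands in the ``otherwise'' branch of Theorem \ref{PlOverPaTh} and the congruence $\overline{pl}(4n+3)\equiv 0\pmod 4$ follows.

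First I would handle the square case. A perfect square $m^2$ satisfies $m^2\equiv 0\pmod 4$ when $m$ is even and $m^2\equiv 1\pmod 4$ when $m$ is odd, so every square is congruent to $0$ or $1$ modulo $4$. Since $4n+3\equiv 3\pmod 4$, it can never be a perfect square. Next I would handle the twice-a-square case by the same reduction: writing a candidate as $2m^2$, the residue $2m^2\bmod 4$ is $0$ when $m$ is even and $2$ when $m$ is odd, so twice a square is always congruent to $0$ or $2$ modulo $4$. Again $4n+3\equiv 3\pmod 4$ is excluded.

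Combining the two checks, no integer $\equiv 3\pmod 4$ is a square or twice a square, so each such integer satisfies the second case of Theorem \ref{PlOverPaTh}, yielding $\overline{pl}(4n+3)\equiv 0\pmod 4$ for all $n\geq 0$. There is essentially no genuine obstacle here: the entire content of the corollary is the elementary observation that the quadratic residues $\{0,1\}$ and the ``twice-square'' residues $\{0,2\}$ modulo $4$ both avoid the class $3\pmod 4$, and the analytic substance was already expended in proving Theorem \ref{PlOverPaTh} via the congruence $\overline{pl}(n)\equiv\overline{p}_{o}(n)\pmod 4$.
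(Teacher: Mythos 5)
Your proposal is correct and follows essentially the same route as the paper: both deduce the corollary from Theorem \ref{PlOverPaTh} by checking that $4n+3$ is neither a square nor twice a square. The only cosmetic difference is that the paper rules out the twice-a-square case by noting $4n+3$ is odd, whereas you compute $2m^2 \bmod 4$; both are equally valid.
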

\begin{proof}
Note that for all $n\geq 0$, $4n+3$ is not a square since positive odd squares are $1$ modulo $4$. Also $4n+3$ is odd so it can not be twice a square for all $n\geq 0$. The result then follows by Theorem \ref{PlOverPaTh}.
\end{proof}

\begin{proof}[\textbf{Proof of Theorem \ref{OvPa}}]
Following the same procedure in Theorem \ref{PlOverPaTh}, we note that
\begin{alignat}{3}\label{divisor}
\sum_{n=0}^{\infty}{\overline{pl}(n)q^{n}} 
& \equiv f(q) \cdot f(q^3)\cdot f(q^{5})\cdots \pmod{4}\nonumber\\ 
&\equiv \left( 1+2\sum_{m\geq 1}{q^m}\right)\cdot \left( 1+2\sum_{m\geq 1}{q^{3m}}\right)\cdot \left( 1+2\sum_{n\geq 1}{q^{5m}}\right)\cdots  \pmod{4} \nonumber \\
&\equiv 1+2\sum_{m\geq 1}{q^m}+2\sum_{m\geq 1}{q^{3m}}+2\sum_{m\geq 1}{q^{5m}}+\cdots \pmod{4}\nonumber\\
&\equiv 1+2\sum_{m\geq 1}{(q^m+q^{3m}+q^{5m}+\cdots)} \pmod{4}. 
\end{alignat}

Now for any integer $n>1,$ by the fundamental theorem of arithmetic, $n$ can be written as a product of prime powers. Thus,
\begin{align}\label{factor}
n=2^{\alpha_{0}}p_{1}^{\alpha_{1}}\cdots p_{k}^{\alpha_{k}},
\end{align}
 where $p_{i}$ are primes and $\alpha_{0},\alpha_{i}$ are nonnegative integers for each $i=1,\dots,k$. Thus $ord_{p_{i}}(n)=\alpha_{i}$ for each $i=1,\dots,k.$  Note that the term $q^n$ will occur in the series
 \begin{align*}
 \sum_{m\geq 1}{(q^m+q^{3m}+q^{5m}+\cdots)}
 \end{align*}
   when $m=n/d$ in $q^{dm}$ where $d$ is an odd divisor of $n$.  In terms of the prime factorization of $n$ in \eqref{factor}, the number of odd divisors of $n$ is given by
 $$\prod_{i=1}^{k}{\left(\alpha_{i}+1 \right)}=\prod_{i=1}^{k}{\left(ord_{p_{i}}(n)+1\right)}=\prod_{odd\; prime\; p|n}{\left(ord_{p}(n)+1\right)}.$$
  Thus the coefficient of $q^n$ in \eqref{divisor} is then given by
 $$2\cdot \prod_{odd\; prime\; p|n}{\left(ord_{p}(n)+1\right)}.$$
  \end{proof}
We now prove Theorem \ref{th11}.

\begin{proof}[\textbf{Proof of Theorem \ref{th11}}]
Let $k\geq 2$ be even. We first observe that the generating function of the $k$-rowed plane overpartitions can be rewritten modulo $4$ using \eqref{PLOvkGen} and Lemma \ref{lem1} to obtain
	\begin{alignat*}{3}
	 \sum_{n=0}^{\infty}{\overline{pl}_{k}(n)q^{n}} &=  \prod_{n=1}^{\infty}{\frac{(1+q^{n})^{\text{min}\{k,n\}}}{(1-q^{n})^{\text{min}\{k,n\}}}} =\prod_{n=1}^{\infty}f(q^n)^{\text{min}\{k,n\}}\\
	&= f(q) f(q^2)^2\cdots f(q^{k-1})^{k-1}\cdot \prod_{n\geq k}{f(q^n)^k} \\
	& \equiv f(q) f(q^3)\cdots f(q^{k-1})\pmod{4}\\ 
	&\equiv \left( 1+2\sum_{n\geq 1}{q^n}\right)\cdot \left( 1+2\sum_{n\geq 1}{q^{3n}}\right)\cdots \left( 1+2\sum_{n\geq 1}{q^{(k-1)n}}\right) \pmod{4}\\
	&\equiv 1+2\sum_{n\geq 1}{q^n}+2\sum_{n\geq 1}{q^{3n}}+\cdots +2\sum_{n\geq 1}{q^{(k-1)n}}\pmod{4}.
	\end{alignat*}
Thus, we see that	
	\begin{alignat*}{3}
	\sum_{n=0}^{\infty}{\overline{pl}_{k}(n)q^{n}}& \equiv 1+2\sum_{i\in S_{k}}\sum_{n\geq 1}{q^{in}}\pmod{4}\\
	& \equiv 1+2\sum_{i\in S_{k}}\sum_{in\not\equiv 0(\text{mod} \;\ell )}{q^{in}}+2\sum_{i\in S_{k}}\sum_{n\geq 1}{q^{\ell n}}\pmod{4}\\
	& \equiv 1+2\sum_{i\in S_{k}}\sum_{in\not\equiv 0(\text{mod} \;\ell )}{q^{in}}+2|S_{k}|\sum_{n\geq 1}{q^{\ell n}}\pmod{4}\\
	& \equiv 1+2\sum_{i\in S_{k}}\sum_{in\not\equiv 0(\text{mod} \;\ell )}{q^{in}}+k\sum_{n\geq 1}{q^{\ell n}} \pmod{4},
	\end{alignat*}
where the last congruence is obtained using the fact that $|S_{k}|=k/2$. Thus, we obtain that
	\[   
	\overline{pl}_{k}(\ell n)\equiv 
	\begin{cases}
	0\pmod{4} &\text{if $k\equiv 0\pmod{4}$}\\
	2\pmod{4} &\text{if $k\equiv 2\pmod{4}$.} \ 
	\end{cases}
	\] 
Now for a prime $p \in S_{k}$ and $s:=ord_{p}(\ell)$, we let
	\begin{align*}
	\sum_{n\geq 1}{\alpha(n)q^n}:=\sum_{n\geq 1}{\left(q^{n}+q^{pn}+q^{p^{2}n}+\dots +q^{p^{s}n}\right)}.
	\end{align*}
	For any $m\geq 1$ and $1\leq r\leq s$, the term $q^{m\ell+p^r}$ will occur in the above series when $n=m\ell+p^r,\frac{m\ell+p^r}{p}, \dots, \frac{m\ell+p^r}{p^r},$ arising from the terms $q^{n}, q^{pn}, \dots, q^{p^{r}n},$ respectively. The term $q^{m\ell+p^r}$ can not be obtained from $\sum_{n\geq 1}{q^{p^in}}$ for $i>r$, since $p^i$ does not divide $m\ell+p^r$. Thus
	
	\[   
	\alpha(m\ell+p^r) =r+1\equiv
	\begin{cases}
	0\pmod{2} &\text{if $r$ is odd}\\
	1\pmod{2} &\text{if $r$ is even}.\ 
	\end{cases}
	\] 

Observe that
	\begin{align*}
	\sum_{j\in S_{k}}\sum_{n\geq 1}{q^{jn}}
	&=\sum_{n\geq 1}{\left(q^n+q^{pn}+\dots+q^{p^{s}n}\right)}+\sum_{j\in S_{k}-\{p^{i}:0\leq i\leq s\}}\sum_{n\geq 1}{q^{jn}}\\
	&=\sum_{n\geq 1}{\alpha(n)q^n}+\sum_{j\in S_{k}-\{p^{i}:0\leq i\leq s\}}\sum_{n\geq 1}{q^{jn}}.
	\end{align*}
Thus, we obtain
\begin{align}\label{plk}
\sum_{n=0}^{\infty}{\overline{pl}_{k}(n)q^{n}}\equiv 1+2\sum_{n\geq 1}{\alpha(n)q^n}+2\sum_{j\in S_{k}-\{p^{i}:0\leq i\leq s\}}\sum_{n\geq 1}{q^{jn}} \pmod{4}.
\end{align}
	Also, we note that for all $n,m\geq 1$, if $j\in S_{k}-\{p^{i}:0\leq i\leq s\},$ then $jn \not = \ell m+p^{r}$ for all $1\leq r\leq s$. If not, then there are two positive integers $n_{0}, m_{0}$ such that $jn_{0} = \ell m_{0}+p^{r}$, and thus $n_{0}= (\ell m_{0}+p^{r})/j$. Since by the choice of $\ell$, we have that $j$ divides $\ell$, then $j$ must divide $p^{r}$ which contradicts that $j\not =p^{i}$ for all $0\leq i\leq s$.
	Thus terms of the form $q^{\ell n+p^{r}}$ will arise in $\sum_{j\in S_{k}}\sum_{n\geq 1} q^{jn}$ only from $\sum_{n\geq 1} \alpha(n) q^n$.
	 
	Now, If we extract the terms of the form $q^{\ell n+p^{r}} $ and replace $n$ with $\ell n+p^{r}$ in \eqref{plk}, we find that,
	\begin{align*}
	\sum_{n\geq 1}{\overline{pl}_{k}(\ell n+p^{r})q^{n}}\equiv 2\cdot \sum_{n\geq 1}{\alpha(\ell n+p^{r}) q^{n}} \equiv 2(r+1)\sum_{n\geq 1}{q^n} \pmod{4}.
	\end{align*}
	
 Thus, modulo $4$,
	\[   
	\ \overline{pl}_{k}(\ell n+p^{r})\equiv 2\alpha(\ell n+p^r) =2(r+1)\equiv
	\begin{cases}
	0\pmod{4} &\text{if $r$ is odd,}\\
	2\pmod{4} &\text{if $r$ is even}.\ 
	\end{cases}
	\] 
\end{proof}	

As an application of Theorem \ref{th11}, we give a few examples in the following corollary.
 \begin{corollary}
 	The following hold for all $n\geq 1$,
 	\begin{align*}
\overline{pl}_{4}(3n)\equiv 0\pmod{4},
 	\end{align*}
 	\begin{align*}
\overline{pl}_{6}(15n+b)\equiv 0\pmod{4}, \;\;	for\;\; b\in \{3,5\},
 	\end{align*}
 \begin{align*}
\overline{pl}_{6}(15n)\equiv 2\pmod{4},
 \end{align*}
  \begin{align*}
\overline{pl}_{8}(105n+b)\equiv 0\pmod{4}, \;\;	for\;\; b\in \{0,3,5,7\},
 	\end{align*}
 	\begin{align*}
\overline{pl}_{10}(315n+b)\equiv 0\pmod{4},  \;\;	for\;\; b\in \{3,5,7\},
 	\end{align*}
 	\begin{align*}
\overline{pl}_{10}(315n+b)\equiv 2\pmod{4},  \;\;	for\;\; b\in \{0,9\},
 	\end{align*}
 	\begin{align*}
 	 	\overline{pl}_{12}(3465n+b)\equiv 0\pmod{4}, \;\;	for\;\; b\in \{0,3,5,7,11\},
 	 \end{align*}
\begin{align*}
 	\overline{pl}_{12}(3465n+9)\equiv 2\pmod{4}.
 \end{align*}
    
\end{corollary}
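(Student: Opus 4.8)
The plan is to obtain every line of the corollary as a direct specialization of Theorem \ref{th11}; no new ideas are needed beyond careful bookkeeping. For each prescribed even $k\in\{4,6,8,10,12\}$ I would first write down the set $S_k=\{j : j\text{ odd},\,1\leq j\leq k-1\}$ and compute $\ell=\mathrm{lcm}(S_k)$, then list the odd primes $p<k$ together with their exponents $ord_p(\ell)$. The values are $\ell=3$ for $k=4$, $\ell=15$ for $k=6$, $\ell=105$ for $k=8$, $\ell=315$ for $k=10$, and $\ell=3465$ for $k=12$, which are exactly the moduli appearing in the corollary.

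Next I would read off each residue class from the two cases of Theorem \ref{th11}. The $b=0$ statements come entirely from the ``moreover'' clause: since $k\equiv 0\pmod 4$ for $k\in\{4,8,12\}$ we get $\overline{pl}_k(\ell n)\equiv 0\pmod 4$, while $k\equiv 2\pmod 4$ for $k\in\{6,10\}$ gives $\overline{pl}_k(\ell n)\equiv 2\pmod 4$. The remaining statements are the prime-power classes $b=p^r$ with $1\leq r\leq ord_p(\ell)$, where $r$ odd yields $0\pmod 4$ and $r$ even yields $2\pmod 4$. For instance, for $k=10$ the prime $p=3$ has $ord_3(315)=2$, so $r=1$ gives $\overline{pl}_{10}(315n+3)\equiv 0$ and $r=2$ gives $\overline{pl}_{10}(315n+9)\equiv 2$, while $p=5,7$ contribute only $r=1$ and hence the classes $b=5,7$ with residue $0$; the analogous analysis with $p=3$ of order $2$ and $p=5,7,11$ of order $1$ produces all the $k=12$ lines.

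The only points requiring care --- and the closest thing to an obstacle --- are arithmetic rather than conceptual: correctly evaluating each $\mathrm{lcm}$ and each $p$-adic order $ord_p(\ell)$ (in particular noting that $9=3^2\in S_k$ forces $ord_3(\ell)=2$ once $k\geq 10$, which is exactly what produces the even-$r$ value $2\pmod 4$ at $b=9$), and checking that the permitted exponents $r$ never exceed $ord_p(\ell)$. I would also flag the harmless overlap in the $k=4$ case: since $\ell=3$, the prime-power class $b=3^1=3$ coincides with a multiple of $\ell$, so the single listed statement $\overline{pl}_4(3n)\equiv 0\pmod 4$ follows from the moreover clause and is consistent with the main clause. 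Collecting the classes for each $k$ then reproduces the corollary verbatim.
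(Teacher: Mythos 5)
Your proposal is correct and matches the paper's proof exactly: the paper likewise specializes Theorem \ref{th11} case by case, working out $k=4$ and $k=6$ explicitly (with the same $S_k$, $\ell$, and $ord_p(\ell)$ computations you give) and stating that the remaining cases follow similarly. Your additional bookkeeping for $k=8,10,12$, including the observation that $ord_3(\ell)=2$ once $9\in S_k$ produces the residue $2\pmod 4$ at $b=9$, is just the "similarly" spelled out.
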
 

\begin{proof}
For the first congruence, letting $k=4$, we have that $S_{4}=\{1,3\}$, and $\ell=3$. Since $k\equiv 0\pmod{4}$, by Theorem \ref{th11}, for all $n\geq 1$
\begin{equation*}
\overline{pl}_{4}(3n)\equiv 0 \pmod{4}.
\end{equation*}
Now to see the second and third congruences, let $k=6$, then $S_{6}=\{1,3,5\}$ and $\ell=15$. The only primes in $S_{6}$ are 3 and 5 with $ord_{p}(\ell)=1$ for $p=3,5$. Hence $r=1$ is the only choice for $1\leq r\leq ord_{p}(\ell)$. Thus by Theorem \ref{th11}, for all $n\geq 1,$
\begin{alignat*}{2}
&\overline{pl}_{6}(15n+3)&\equiv 0&\pmod{4},\\
&\overline{pl}_{6}(15n+5)&\equiv 0&\pmod{4}.
\end{alignat*}
Moreover, $k=6\equiv 2 \pmod{4}$ which yields that for all $n\geq 1,$
\begin{align*}
\overline{pl}_{6}(15n)\equiv 2\pmod{4}.
\end{align*}
The rest of the identities can be proved similarly.
\end{proof}

\begin{proof}[\textbf{Proof of Theorem \ref{th22}}]
  Clearly, for $k=0,\; \overline{pl}_{1}(n)= \overline{p}(n),$ for all $n\geq 0.$ Now, for $k\geq 1$, we first define 
  \begin{align*}
  g(q):=\frac{1}{f(q)},
  \end{align*}
  and note that by Lemma \ref{lem1} and \eqref{eqf2} that
 	\begin{align}\label{Theq}
 	g(q)\equiv f(q)=1+2\sum_{n\geq 1}{q^n}\pmod{4}.
 	\end{align}
 	and recall the generating function of $2k+1$-rowed plane overpartions \eqref{PLOvkGen},
 	\begin{alignat*}{3}
 	\sum_{n=0}^{\infty}{\overline{pl}_{2k+1}(n)q^{n}}
 	&=\prod_{n=1}^{\infty}{\frac{(1+q^{n})^{\text{min}\{2k+1,n\}}}{(1-q^{n})^{\text{min}\{2k+1,n\}}}}= \prod_{n=1}^{\infty}{f(q^n)^{\text{min}\{2k+1,n\}}}& \\
 	&= f(q) f(q^2)^2\cdots f(q^{2k})^{2k}\cdot \prod_{n\geq 2k+1}{f(q^n)^{2k+1}}& \\
 	&\equiv f(q) f(q^3)\cdots f(q^{2k-1})\cdot \prod_{n\geq 2k+1}{f(q^n)} \pmod{4},&
 	\end{alignat*}
 where the last congruence is by Lemma \ref{lem1}. Thus, we have by \eqref{Theq} that
 	\begin{align*}
 	\sum_{n=0}^{\infty}{\overline{pl}_{2k+1}(n)q^{n}}
     & \equiv g(q^2) g(q^4) g(q^6)\cdots g(q^{2k}) \cdot \prod_{n=1}^{\infty}{f(q^n)}\pmod{4} &\\
     & \equiv f(q^2) f(q^4) f(q^6)\cdots f(q^{2k}) \cdot \prod_{n=1}^{\infty}{f(q^n)}\pmod{4} &\\
 	& \equiv \left( 1+2\sum_{n\geq 1}{q^{2n}}\right)\cdot \left( 1+2\sum_{n\geq 1}{q^{4n}}\right)\cdot \left( 1+2\sum_{n\geq 1}{q^{6n}}\right)\cdots  &\\
 	&\;\;\;\;\left( 1+2\sum_{n\geq 1}{q^{2kn}}\right) \cdot 
 	\left(\sum_{n\geq 0}{\overline{p}(n)q^n}\right)\pmod{4}.&
 	\end{align*}
 	Note that for all $n\geq 1, \overline{p}(n)\equiv 0\pmod{2},$  and hence $2\overline{p}(n)\equiv 0\pmod{4}.$ Consequently,
 	\begin{alignat}{2}\label{eqth11}
 	\sum_{n=0}^{\infty}{\overline{pl}_{2k+1}(n)q^{n}}&\equiv
 	 \left(1+2 \sum_{n\geq 1}{\left(q^{2n}+q^{4n}+ q^{6n}+\cdots +q^{2kn}\right)}\right) \cdot \left( \sum_{n\geq 0}{\overline{p}(n)q^n}\right) \pmod{4}\nonumber\\
  	 &\equiv  2 \sum_{n\geq 1}{\left(q^{2n}+q^{4n}+q^{6n}+ \cdots +q^{2kn}\right)} +\sum_{n\geq 0}{\overline{p}(n) q^n} \pmod{4}.
 	\end{alignat}
 	Thus, for all $k\geq 0, n\geq 0$,
 	$$\overline{pl}_{2k+1}(2n+1)\equiv \overline{p}(2n+1)\pmod{4},$$
 	as desired.	 
 \end{proof}
\begin{corollary}\label{CoroPl1}
The following holds for every integer $n\geq 0,$
\begin{align}
	\overline{pl}(2n+1)\equiv \overline{p}(2n+1)\pmod{4}.
\end{align}
\end{corollary}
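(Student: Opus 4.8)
The plan is to deduce this directly from Theorem \ref{th22} via a stabilization argument: I will show that for a \emph{fixed} degree the unrestricted plane overpartition function agrees exactly (not just modulo $4$) with the $(2k+1)$-rowed function once $2k+1$ is large enough, and then invoke the congruence already proved for the restricted function. The essential point is that increasing the allowed number of rows only affects the coefficients of sufficiently high degree.

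Concretely, recall from \eqref{GenPl} and \eqref{PLOvkGen} that, writing $f$ as in \eqref{eqf},
\begin{align*}
\overline{PL}(q)=\prod_{n=1}^{\infty}f(q^n)^{n},\qquad \overline{PL}_{2k+1}(q)=\prod_{n=1}^{\infty}f(q^n)^{\min\{2k+1,\,n\}}.
\end{align*}
The two products differ only in the factors with $n>2k+1$, where the exponent $n$ is replaced by $2k+1$; for all $n\leq 2k+1$ one has $\min\{2k+1,n\}=n$, so those factors coincide. Since each factor $f(q^n)^{\,e}=\bigl(1+2q^n+2q^{2n}+\cdots\bigr)^{e}$ has constant term $1$ and its next nonzero contribution is of degree at least $n$, any factor with index $n>2k+1$ contributes only to coefficients of degree strictly greater than $2k+1$. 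Consequently, for every $N\leq 2k+1$ the coefficient of $q^{N}$ in $\overline{PL}(q)$ equals the coefficient of $q^{N}$ in $\overline{PL}_{2k+1}(q)$, that is, $\overline{pl}(N)=\overline{pl}_{2k+1}(N)$.

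The next step is to apply this identity in the regime we need. Taking $N=2n+1$ and choosing $k\geq n$ (for instance $k=n$, so that $2k+1=2n+1\geq N$), the stabilization just established yields the exact equality
\begin{align*}
\overline{pl}(2n+1)=\overline{pl}_{2n+1}(2n+1).
\end{align*}
Theorem \ref{th22}, applied with this value of $k$, gives $\overline{pl}_{2n+1}(2n+1)\equiv \overline{p}(2n+1)\pmod 4$, and combining the two relations finishes the argument:
\begin{align*}
\overline{pl}(2n+1)=\overline{pl}_{2n+1}(2n+1)\equiv \overline{p}(2n+1)\pmod 4.
\end{align*}

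There is no genuine obstacle here; the only thing requiring care is the bookkeeping in the middle paragraph, namely confirming that the high-index factors $f(q^n)^{\min\{2k+1,n\}}$ with $n>2k+1$ cannot perturb the coefficient of $q^{2n+1}$. This is immediate from the fact that such a factor equals $1$ modulo $q^{2k+2}$, but it is the one place where one must be explicit about why the truncation to $2k+1$ rows is lossless in this degree. One should also note the base case $n=0$ is covered, since then $k=0$ and $\overline{pl}_{1}=\overline{p}$ by definition.
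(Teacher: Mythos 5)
Your proposal is correct and is essentially the paper's own argument: both reduce to the exact stabilization $\overline{pl}(2n+1)=\overline{pl}_{2k+1}(2n+1)$ for $k\geq n$ and then invoke Theorem \ref{th22}. The only cosmetic difference is that you justify the stabilization by truncating the generating-function factors with index exceeding $2k+1$, whereas the paper notes combinatorially that a plane overpartition of $n$ has at most $n$ rows.
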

\begin{proof}
Note that for every integer $n\geq 1$, the plane overpartitions of $n$ have at most $n$ rows. Thus, we obtain for any $k\geq n$,
$$\overline{pl}(n)=\overline{pl}_{k}(n).$$
By Theorem \ref{th22}, for $k\geq n,$
\begin{align*}
\overline{pl}(2n+1)=\overline{pl}_{2k+1}(2n+1)\equiv \overline{p}(2n+1)\pmod{4}.
\end{align*}
\end{proof}

 Next result gives an infinite family of restricted plane overpartitions congruences modulo $4$.
\begin{corollary}\label{cor1}
	For all  $k, n\geq 0,$ and $\alpha\geq 0$, 
	\begin{align*}
	\overline{pl}(9^{\alpha}(54n + 45))\equiv \overline{pl}_{2k+1}(9^{\alpha}(54n + 45))\equiv 0 \pmod{4}.
	\end{align*}
\end{corollary}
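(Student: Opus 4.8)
The plan is to reduce the entire statement to the overpartition congruence \eqref{seller1} of Hirschhorn and Sellers. The crucial algebraic observation is a reindexing that forces the argument into the progression appearing there: since
\[
54n + 45 = 27(2n+1) + 18,
\]
we have $9^{\alpha}(54n+45) = 9^{\alpha}\bigl(27(2n+1)+18\bigr)$, which is exactly of the shape $9^{\alpha}(27N+18)$ with $N = 2n+1$. Thus \eqref{seller1} applies directly, taking its index to be $N = 2n+1 \geq 0$, and yields
\[
\overline{p}\bigl(9^{\alpha}(54n+45)\bigr) \equiv 0 \pmod{12}.
\]
Since $4 \mid 12$, this immediately gives $\overline{p}\bigl(9^{\alpha}(54n+45)\bigr) \equiv 0 \pmod{4}$.

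Next I would transfer this divisibility to the two plane overpartition functions using the congruences already established for odd arguments. The key point is that $9^{\alpha}(54n+45)$ is odd: $9^{\alpha}$ is odd, and $54n+45$ is odd because $54n$ is even and $45$ is odd. Writing $9^{\alpha}(54n+45) = 2m+1$ for a suitable $m \geq 0$, Corollary \ref{CoroPl1} gives
\[
\overline{pl}\bigl(9^{\alpha}(54n+45)\bigr) \equiv \overline{p}\bigl(9^{\alpha}(54n+45)\bigr) \pmod{4},
\]
and Theorem \ref{th22} gives, for every $k \geq 0$,
\[
\overline{pl}_{2k+1}\bigl(9^{\alpha}(54n+45)\bigr) \equiv \overline{p}\bigl(9^{\alpha}(54n+45)\bigr) \pmod{4}.
\]
Combining each of these with the divisibility from the first step shows that both quantities vanish modulo $4$, which is the claim.

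I do not expect a substantial obstacle here; the whole argument rests on spotting the reindexing $54n+45 = 27(2n+1)+18$ together with the parity check that the argument is odd, so that the odd-argument congruences of Corollary \ref{CoroPl1} and Theorem \ref{th22} are applicable. The only points requiring a moment of care are confirming that the index $N = 2n+1$ is admissible in \eqref{seller1} (it is, since that congruence holds for all $N \geq 0$) and that $\alpha$ remains unrestricted throughout, matching the ranges quantified in the statement.
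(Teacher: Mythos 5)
Your proof is correct and follows essentially the same route as the paper: both arguments rest on the reindexing $54n+45 = 27(2n+1)+18$, the observation that the argument is odd, and the transfer to plane overpartitions via Theorem \ref{th22} and Corollary \ref{CoroPl1}. The only cosmetic difference is that you invoke the mod $12$ congruence \eqref{seller1} and divide out, whereas the paper cites the underlying fact that $9^{\alpha}(27n+18)$ is nonsquare together with \eqref{NonSqMod4}; these are interchangeable since \eqref{seller1} is itself derived from that nonsquare observation.
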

\begin{proof}
	Recall that in \cite{hirschhorn}, Hirschhorn and Sellers show that $9^{\alpha}(27n + 18)$ is nonsquare for all $\alpha, n\geq 0$. Thus by \eqref{NonSqMod4}, we have $\overline{p}(9^{\alpha}(27n + 18))\equiv 0 \pmod{4}$ for all $n\geq 0$ and $\alpha\geq 0$. For any odd integer $n$, $9^{\alpha}(27n + 18)$ is odd. Replacing the odd integer $n$ by $2n+1$, the result follows by Theorem \ref{th22} and Corollary \ref{CoroPl1}.
\end{proof}
\begin{proof}[\textbf{Proof of Theorem \ref{th333}}]
Recall from the proof of Theorem \ref{th22} and \eqref{eqth11} that
\begin{align*}
	\sum_{n=0}^{\infty}{\overline{pl}_{2k+1}(n)q^{n}}\equiv  2 \sum_{n\geq 1}{\left(q^{2n}+q^{4n}+q^{6n}+ \cdots +q^{2kn}\right)} +\sum_{n\geq 0}{\overline{p}(n) q^n} \pmod{4}.
	\end{align*}
We note for odd $r>1$, then $2r\nmid \ell m+2^j$, as well  $2^{i}\nmid \ell m+2^j$ for $i>j$. Thus, we get for all $m\geq 1$, the term $q^{\ell m+2^j}$ will occur in the series 
$$2\sum_{n\geq 1}{\left(q^{2n}+q^{4n}+ q^{6n}+\cdots +q^{4kn}\right)},$$ 
only when $n=\ell m+2^j/2,\ell m+2^j/4, \ell m+2^j/8,\ldots,\ell m+2^j/2^j,$ arising from the terms $q^{2n}, q^{4n},q^{8n},\ldots,q^{2^j n},$ respectively. Thus, the coefficient of $q^{\ell m+2^j}$ in the above series is $2\sum_{i=1}^{j}1=2j\equiv 0\pmod{4}$ since $j\equiv 0\pmod{2}$. Therefor, for all $n\geq 1$,
	$$\overline{pl}_{2k+1}(\ell n+2^j)\equiv \overline{p}(\ell n+2^j)\pmod{4},$$
as desired for \eqref{th333eq1}. To prove \eqref{th333eq2}, since $k\equiv 0\pmod{2}$, we replace $k$ by $2k$ in  \eqref{eqth11} to obtain
\begin{align}
	\sum_{n=0}^{\infty}{\overline{pl}_{4k+1}(n)q^{n}}\equiv  2 \sum_{n\geq 1}{\left(q^{2n}+q^{4n}+ \cdots +q^{4kn}\right)} +\sum_{n\geq 0}{\overline{p}(n) q^n} \pmod{4}.
\end{align}
Note that for all $m\geq 1$, the term $q^{\ell m}$ will occur in the series 
$$2\sum_{n\geq 1}{\left(q^{2n}+q^{4n}+ \cdots +q^{4kn}\right)},$$ 
when $n=\ell m/2,\ell m/4,\ldots,\ell m/4k,$ arising from the terms $q^{2n}, q^{4n},\ldots,q^{4kn},$ respectively. Thus, the coefficient of $q^{\ell m}$ in the above series is $2\sum_{i=1}^{2k}1=4k\equiv 0\pmod{4}$. Therefor, for all $n\geq 0$,
	$$\overline{pl}_{4k+1}(\ell n)\equiv \overline{p}(\ell n)\pmod{4},$$
where $\ell$ here is the least common multiple of all even positive integers $\leq 4k.$
\end{proof}

\begin{proof}[\textbf{Proof of Theorem \ref{ThmMod8}}]
	Observe that by \eqref{PLOvkGen} and Lemma \ref{lem1}, we have that
	\begin{align*}
	\sum_{n=0}^{\infty}{\overline{pl}_{4}(n)q^{n}}
	&= f(q) f(q^2)^2 f(q^3)^3  \prod_{n\geq 4}{f(q^n)^4} & &\\
	&\equiv f(q) f(q^2)^2 f(q^3)^3\pmod{8} & &\\ 
	& \equiv \left( 1+2\sum_{n\geq 1}{q^{n}}\right)\cdot \left( 1+2\sum_{n\geq 1}{q^{2n}}\right)^2\cdot \left( 1+2\sum_{n\geq 1}{q^{3n}}\right)^3 \pmod{8}&&
	\end{align*}
Thus,
	\begin{align}\label{gen2}
	\sum_{n=0}^{\infty}{\overline{pl}_{4}(n)q^{n}}&\equiv 1+2\sum_{n\geq 1}{q^n}+4\sum_{n\geq 1}{q^{2n}} +6\sum_{n\geq 1}{q^{3n}}+&&\\
		&\;\;\;\;\;4\sum_{m,n\geq 1}{q^{2(n+m)}}+4\sum_{m,n\geq 1}{q^{3(n+m)}}+4\sum_{m,n\geq 1}{q^{n+3m}}\pmod{8}&&\nonumber
	\end{align}
	For any $k\geq 1$, the term $q^{12k}$ will occur in the series
	$$\sum_{n\geq 1}{q^n}, \sum_{n\geq 1}{q^{2n}}, \sum_{n\geq 1}{q^{3n}}$$
	when $n=12k, 6k, 4k$, arising from the terms $q^{n}, q^{2n},q^{3n}$, respectively. Also, the term $q^{12k}$ will occur in the series
	\begin{align}\label{se1}
    \sum_{m,n\geq 1}{q^{2(n+m)}}, \sum_{m,n\geq 1}{q^{3(n+m)}}, \sum_{m,n\geq 1}{q^{n+3m}}
	\end{align}
	when $n+m=6k, 4k$ and $n+3m=12k$, arising from the terms $q^{2(n+m)}, q^{3(n+m)},q^{n+3m}$, respectively. We use Lemma \ref{lemma(ab)} to count the appearances of $q^{12k}$ in the three series of \eqref{se1} and catalog the results in the following table.
	
	\FloatBarrier
	\begin{table}[H]
	\caption{Coefficients of $q^{12k}$ in the series of \eqref{se1}} 
	\centering 
	\begin{tabular}{c c c c c c} 
	\hline\hline 
	$an+bm=abc$ & $a$ & $b$ & $c$& $j$& coefficient of $q^{abcj}$ in $\sum_{n,m\geq 1}q^{j(an+bm)}$\\ [0.5ex] 
	\hline 
	
	$n+m=6k$    & $1$ & $1$ & $6k$ & $2$ & $6k-1$ \\ 
	$n+m=4k$    & $1$ & $1$ & $4k$ & $3$ & $4k-1$ \\
	$n+3m=12k$  & $1$ & $3$ & $4k$ & $1$ & $4k-1$ \\[1ex] 
	\hline 
	\end{tabular}
	\label{table1} 
	\end{table}
	
	\noindent Thus by Table \ref{table1}, the coefficient of $q^{12k}$ in the series on the right hand side of \eqref{gen2} is 
	$$2+4+6+4(6k-1)+4(4k-1)+4(4k-1)\equiv 0\pmod{8},$$
	which proves (\ref{c11}).\\
	To prove (\ref{c22}), we observe that for any $k\geq 1$, the term $q^{6k+3}$ will occur in the series
	$$\sum_{n\geq 1}{q^n}, \sum_{n\geq 1}{q^{3n}}$$
	from \eqref{gen2} when $n=6k+3, 2k+1,$ arising from the terms $q^{n}, q^{3n}$, respectively. Also, the term $q^{6k+3}$ will occur in the series
	\begin{align}\label{ss2}
    \sum_{m,n\geq 1}{q^{3(n+m)}}, \sum_{m,n\geq 1}{q^{n+3m}}
	\end{align}
	from \eqref{gen2} when $n+m=2k+1, n+3m=6k+3$ arising from the terms $q^{3(n+m)},q^{n+3m}$ respectively. However, the term $q^{6k+3}$ does not occur in the series 
	\begin{align}
	 \sum_{m,n\geq 1}{q^{2n}}, \sum_{m,n\geq 1}{q^{2(n+m)}},
	\end{align}
	because $6k+3$ is not divisible by $2$ for every integer $k\geq 1$.	Again, we use Lemma \ref{lemma(ab)} to conclude the number of occurrences of $q^{6k+3}$ in the series of \eqref{ss2} in the following table

		\FloatBarrier
		\begin{table}[H]
		\caption{Coefficients of $q^{6k+3}$ in the series of \eqref{ss2}} 
		\centering 
		\begin{tabular}{c c c c c c} 
		\hline\hline 
		$an+bm=abc$ & $a$ & $b$ & $c$& $j$& coefficient of $q^{abcj}$ in $\sum_{n,m\geq 1}q^{j(an+bm)}$\\ [0.5ex] 
		\hline 
		
		$n+m=2k+1$    & $1$ & $1$ & $2k+1$ & $3$ & $2k$ \\ 
		$n+3m=6k+3$   & $1$ & $3$ & $2k+1$ & $1$ & $2k$ \\[1ex] 
		\hline 
		\end{tabular}
		\label{table6k+3} 
		\end{table}
	
\noindent Thus by Table \ref{table6k+3}, the coefficient of  $q^{6k+3}$ in the series on the right hand side of \eqref{gen2} is
	$$2+6+4\cdot 2k+4\cdot 2k\equiv 0\pmod{8},$$
	which proves (\ref{c22}).
	
	We now prove (\ref{c34}) while (\ref{c33}) can be proved similarly with less effort. We observe that by \eqref{PLOvkGen} and Lemma \ref{lem1} that
	\begin{align*}
	\sum_{n=0}^{\infty}{\overline{pl}_{8}(n)q^{n}}
	&= f(q)f(q^2)^2 f(q^3)^3 f(q^4)^4 f(q^5)^5 f(q^6)^6 f(q^7)^7 \prod_{n\geq 8}{f(q^n)^8} & &\\
	&\equiv f(q)\; f(q^2)^2\; f(q^3)^3\;  f(q^5)\; f(q^6)^2\; f(q^7)^3 \pmod{8} &&\\ 
	& \equiv \left( 1+2\sum_{n\geq 1}{q^{n}}\right)\cdot \left( 1+2\sum_{n\geq 1}{q^{2n}}\right)^2\cdot \left( 1+2\sum_{n\geq 1}{q^{3n}}\right)^3 \cdot &&\\
	&\;\;\;\;\left(1+2\sum_{n\geq 1}{q^{5n}}\right)\cdot \left( 1+2\sum_{n\geq 1}{q^{6n}}\right)^2\cdot \left( 1+2\sum_{n\geq 1}{q^{7n}}\right)^3\pmod{8}.&&
	\end{align*}
	Thus we have
	\begin{align*}
	\sum_{n=0}^{\infty}{\overline{pl}_{8}(n)q^{n}}
	&\equiv 1+2\sum_{n\geq 1}{q^n}+4\sum_{n\geq 1}{q^{2n}} +6\sum_{n\geq 1}{q^{3n}}+2\sum_{n\geq 1}{q^{5n}}+4\sum_{n\geq 1}{q^{6n}}+6\sum_{n\geq 1}{q^{7n}}+&&\\
	&\;\;\;\;\;4\sum_{m,n\geq 1}{q^{2(n+m)}}+4\sum_{m,n\geq 1}{q^{3(n+m)}}+4\sum_{m,n\geq 1}{q^{6(n+m)}}+4\sum_{m,n\geq 1}{q^{7(n+m)}}+&&\\
	&\;\;\;\;4\sum_{m,n\geq 1}{q^{n+3m}}+4\sum_{m,n\geq 1}{q^{n+5m}}+4\sum_{m,n\geq 1}{q^{n+7m}}+&&\\
	&\;\;\;\;4\sum_{m,n\geq 1}{q^{3n+5m}}+4\sum_{m,n\geq 1}{q^{3n+7m}}+4\sum_{m,n\geq 1}{q^{5n+7m}}\pmod{8}.&&
	\end{align*}
	
	For any $k\geq 1$, the term $q^{210k+3}$ will occur in the series
	$$\sum_{n\geq 1}{q^n}, \sum_{n\geq 1}{q^{3n}}$$
	when $n=210k+3, 70k+1$ arising from the terms $q^{n}, q^{3n}$ respectively. Also, the term $q^{210k+3}$ will occur in the series
	\begin{align*}
	&\sum_{m,n\geq 1}{q^{3(n+m)}},\sum_{m,n\geq 1}{q^{n+3m}},\sum_{m,n\geq 1}{q^{n+5m}}\\
	&\sum_{m,n\geq 1}{q^{n+7m}},\sum_{m,n\geq 1}{q^{3n+5m}},\sum_{m,n\geq 1}{q^{3n+7m}},\sum_{m,n\geq 1}{q^{5n+7m}},
	\end{align*}
	when $ 3(n+m),n+3m, n+5m,n+7m, 3n+5m, 3n+7m,5n+7m=210k+3$ arising from the terms $$ q^{3(n+m)}, q^{n+3m},q^{n+5m}, q^{n+7m}, q^{3n+5m},  q^{3n+7m}, q^{5n+7m}$$ respectively. Since $210k+3$ is not divisible by $2,5,6,7,$ so the term $q^{210k+3}$ will not occur in any of the following $q$-series,
	$$\sum_{n\geq 1}{q^{2n}},\sum_{n\geq 1}{q^{5n}},\sum_{n\geq 1}{q^{6n}},\sum_{n\geq 1}{q^{7n}},\sum_{m,n\geq 1}{q^{2(n+m)}},\sum_{m,n\geq 1}{q^{6(n+m)}},\sum_{m,n\geq 1}{q^{7(n+m)}} .$$
\noindent	Again, by applying Lemma \ref{lemma(ab)}, the appearances of $q^{210k+3}$ in the series
	\begin{align}\label{ss33}
	\sum_{m,n\geq 1}{q^{3(n+m)}},\sum_{m,n\geq 1}{q^{n+3m}},
	\end{align}
	are given in the following table.
	
			\FloatBarrier
			\begin{table}[H]
			\caption{Coefficients of $q^{210k+3}$ in the series of \eqref{ss33}} 
			\centering 
			\begin{tabular}{c c c c c c} 
			\hline\hline 
			$an+bm=abc$ & $a$ & $b$ & $c$& $j$& coefficient of $q^{abcj}$ in $\sum_{n,m\geq 1}q^{j(an+bm)}$\\ [0.5ex] 
			\hline 
			
			$n+m=70k+1$    & $1$ & $1$ & $70k+1$ & $3$ & $70k$ \\ 
			$n+3m=210k+3$   & $1$ & $3$ & $70k+1$& $1$ & $70k$ \\[1ex] 
			\hline 
			\end{tabular}
			\label{table21} 
			\end{table}

	  
	  Now for $n+5m, n+7m=210k+3$, we have the following enumerations
	$$5\cdot 1+(210k-5+3),5\cdot 2+(210k-10+3),\dots, 5\cdot 42k+3,$$
	$$7\cdot 1+(210k-7+3),7\cdot 2+(210k-14+3),\dots, 7\cdot 30k+3.$$ 
	Thus, we have $42k, 30k$ pairs of $m$ and $n$ for $n+5m, n+7m=210k+3$, respectively. 
	
	For $3n+5m=210k+3$, then $5m=210k+3-3n$ and so $3$ divides $m$. Thus, counting for $3n+5m=210k+3$ is equivalent to counting for $3n+15m=210k+3$ which is equivalent to $n+5m=70k+1$ and the later has the following enumerations
		$$5\cdot 1+(70k-5+3),5\cdot 2+(70k-10+3),\dots, 5\cdot 14k+3.$$
	 Hence, we obtain $14k$ possible pairs $n$ and $m$ such that  $3n+5m=210k+3$. Similarly, we have $10k$ pairs of positive integers $m$ and $n$ such that $3n+7m=210k+3$.
	 Thus, the following table catalogs the coefficients of the term  $q^{210k+3}$ in the following series
	 \begin{align}\label{ss44}
     \sum_{m,n\geq 1}{q^{n+5m}},\sum_{m,n\geq 1}{q^{n+7m}},\sum_{m,n\geq 1}{q^{3n+5m}},\sum_{m,n\geq 1}{q^{3n+7m}}.
	 \end{align}

	 			\FloatBarrier
	 			\begin{table}[H]
	 			\caption{Coefficients of $q^{210k+3}$ in the series of \eqref{ss44}} 
	 			\centering 
	 			\begin{tabular}{c c c c} 
	 			\hline\hline 
	 			$an+bm=210k+3$ & $a$ & $b$ &   coefficient of $q^{210k+3}$ in $\sum_{n,m\geq 1}q^{an+bm}$\\ [0.5ex] 
	 			\hline 
	 			
	 			$n+5m=210k+3$    & $1$ & $5$ &   $42k$\\
	 			$n+7m=210k+3$    & $1$ & $7$ &   $30k$\\
	 			$3n+5m=210k+3$   & $3$ & $5$ &   $14k$\\
	 			$3n+7m=210k+3$   & $3$ & $7$ &   $10k$ \\[1ex] 
	 			\hline 
	 			\end{tabular}
	 			\label{table321} 
	 			\end{table}

	 Now, we only need to check the coefficient of $q^{210k+3}$ in the series $\sum_{m,n\geq 1}{q^{5n+7m}}$.
	Note that the integers $a=5, b=7$ and $c=210k+3$ satisfy the desired conditions of Lemma \ref{LemPart}.  Thus $M_{210k+3}$ is the number of the possible pairs of positive integers $(n,m)$ such that $5n+7m=210k+3$ and 
	$$M_{210k+3}\equiv M_{210k+3+\pi_{8}}\pmod{8},$$
	where $\pi_{8}$ is the minimum period modulo $8$ of the following $q$-series
	 \begin{align*}
	 A(q) :=\sum_{n=0}^{\infty}{p(n;S)q^{n}}=\frac{1}{(1-q^5)(1-q^7)}.
	 \end{align*}
	  Letting $S=\{5,7\}$, $\ell=2$, and $N=3$ in Theorem \ref{kwong}, then $\pi_{8}=\pi_{8}(A)=280$. In other words, for all $n\geq 0$,
	 \begin{align*}
      M_{210k+3+\pi_{8}}=p(210k+3+\pi_{8}(A);S)\equiv p(210k+3;S)=M_{210+3} \pmod{8}.
	 \end{align*} 
	 If we let $k=4j$ where $j\in \mathbb{N}$, then we observe by the periodicity of $A(q)$ that
	 $$M_{210k+3}=p(210k+3;S)=p(3+3j\cdot \pi_{8}(A);S)\equiv p(3;S)\pmod{8}=0\pmod{8}.$$
	 By a similar argument for $k=4j-1,4j-2,4j-3$, we obtain the following
	 	\[   
	 	\ M_{210k+3} = p(210k+3;S)\equiv
	 	\begin{cases}
	 	p(3;S)\;\;\;\;\pmod{8}=0\pmod{8} &\text{if $k=4j=4,8,12,\dots$}\\
	 	p(73;S)\;\;\pmod{8}=2\pmod{8} &\text{if $k=4j-1=3,7,11,\dots$}\\
	 	p(143;S)\pmod{8}=4\pmod{8} &\text{if $k=4j-2=2,6,10,\dots$}\\ 
	 	p(213;S)\pmod{8}=6\pmod{8} &\text{if $k=4j-3=1,5,9,\dots$}\
	 	\end{cases}
	 	\] 
	By summing all coefficients of $q^{210k+3}$ and using Tables \ref{table21} and \ref{table321}, we get
	$$2+6+4\cdot (70k+70k+42k+30k+14k+10k+M_{210k+3})\equiv 0 \pmod{8},$$
	which proves (\ref{c34}).  Similarly, the identities \eqref{c35} and \eqref{c37} can be proved using the same technique. However, for the sake of completeness, we show in Tables  \eqref{table210k+9} and \eqref{table210+105} the corresponding coefficients of the terms $q^{210k+9}, q^{210k+105}$ modulo $8$ of the generating function of $8$-rowed plane overpartitions, $\sum_{n=0}^{\infty}{\overline{pl}_{8}(n)q^n}$. 
    
			\FloatBarrier
			\begin{table}[H]
			\caption{Coefficients of $q^{210k+9}, q^{210k+105}$ modulo $8$ in $c\sum_{n\geq 1}{q^{jn}}$  } 
			\centering 
			\begin{tabular}{c c c c c } 
			\hline\hline 
			$c\sum_{n\geq 1}q^{jn}$ & coefficient of& $q^{210k+9},$& $q^{210k+105}$ &in $c\sum_{n\geq 1}q^{jn}$\\ [0.5ex] 
			\hline 
			
			$2\sum_{n\geq 1}q^n$            & &  $2$           &$2$&\\ 
			$4\sum_{n\geq 1}q^{2n}$          &&  $0$           &$0$&\\
			$6\sum_{n\geq 1}q^{3n}$          &&  $6$           &$6$&\\
			$2\sum_{n\geq 1}q^{5n}$          &&  $0$           &$2$&\\
			$4\sum_{n\geq 1}q^{6n}$          &&  $0$           &$0$&\\
			$6\sum_{n\geq 1}q^{7n}$          &&  $0$           &$6$&\\[1ex]
			
			\hline 
			
			\end{tabular}
			\label{table210k+9} 
			\end{table}

			\FloatBarrier
			\begin{table}[H]
			\caption{Coefficients of $q^{210k+9}, q^{210k+105}$ modulo $8$ in $c\sum_{n,m \geq 1}{q^{j(an+bm)}}$  } 
			\centering 
			\begin{tabular}{c c c c c } 
			\hline\hline 
			$c\sum_{n,m\geq 1}q^{j(an+bm)}$ & coefficient of& $q^{210k+9},$& $q^{210k+105}$ &in $c\sum_{n,m\geq 1}q^{j(an+bm)}$\\ [0.5ex] 
			\hline 
			
			$4\sum_{n,m\geq 1}q^{2(n+m)}$    &&  $0$           &$0$&\\
			$4\sum_{n,m\geq 1}q^{3(n+m)}$    &&  $4(70k+2)$    &$4(70k+34)$&\\
			$4\sum_{n,m\geq 1}q^{6(n+m)}$    && $0$            &$0$&\\
			$4\sum_{n,m\geq 1}q^{7(n+m)}$    && $0$            &$4(30k+14)$&\\
			$4\sum_{n,m\geq 1}q^{n+3m}$      && $4(70k+2)$     &$4(70k+34)$&\\
			$4\sum_{n,m\geq 1}q^{n+5m}$      && $4(42k+1)$     &$4(42k+20)$&\\
			$4\sum_{n,m\geq 1}q^{n+7m}$      &&  $4(30k+1)$    &$4(10k+14)$&\\
			$4\sum_{n,m\geq 1}q^{3n+5m}$     &&  $4\cdot 14k$  &$4(14k+6)$&\\
			$4\sum_{n,m\geq 1}q^{3n+7m}$     &&  $4\cdot 10k$  &$4(10k+4)$&\\
			$4\sum_{n,m\geq 1}q^{5n+7m}$     &&  $4\cdot M_{210k+9}$ &$4(6k+2)$&\\[1ex] 
			\hline 
			
			\end{tabular}
			\label{table210+105} 
			\end{table}

\indent Now, we only need to show that the number $M_{210k+9}$ is even.
Similar to the argument above and by applying Lemma \ref{LemPart} for $a=5,b=7$ and $c=210k+9$, we obtain the following,

\[   
	 	\ M_{210k+9} = p(210k+9;S)\equiv
	 	\begin{cases}
	 	p(9;S)\;\;\;\;\pmod{8}=0\pmod{8} &\text{if $k=4j=4,8,12,\dots$}\\
	 	p(79;S)\;\;\pmod{8}=2\pmod{8} &\text{if $k=4j-1=3,7,11,\dots$}\\
	 	p(149;S)\pmod{8}=4\pmod{8} &\text{if $k=4j-2=2,6,10,\dots$}\\ 
	 	p(219;S)\pmod{8}=6\pmod{8} &\text{if $k=4j-3=1,5,9,\dots$}\
	 	\end{cases}
	 	\] 
Thus by $M_{210k+9}$ is being even, the coefficient of $q^{210k+9}$ modulo $8$ in  $\sum_{n=0}^{\infty}{\overline{pl}_{8}(n)q^n}$ is given by summing all coefficients of $q^{210k+9}$ in Tables \eqref{table210k+9} and \eqref{table210+105}, so we obtain
$$2+6+4\cdot \left(70k+2+70k+2+42k+1+30k+1+14k+10k+M_{210k+9}\right)\equiv 0 \pmod{8},$$
as desired for the identity \eqref{c35}.
 
For the identity \eqref{c37}, by Tables \eqref{table210k+9} and \eqref{table210+105}, the corresponding coefficient modulo $8$ of $q^{210k+105}$  in the series $\sum_{n=0}^{\infty}{\overline{pl}_{8}(n)q^n}$ is congruent to
\begin{align*}
16+4(252k+128)\equiv 0\pmod{8}.
\end{align*} 
  
\end{proof}

 \begin{proof}[\textbf{Proof of Theorem \ref{CongOverMod8}}]
  We recall from \eqref{GenConOver} that 
  \begin{align*}
  \sum_{n=0}^{\infty}\overline{p}(n)q^{n}\equiv \prod_{j=0}^{k-2}\left(\phi(q^{2^j})\right)^{2^j}\pmod{2^{k}},
  \end{align*}
  where as in \eqref{mo1},
  \begin{align*}
  \phi(q)=\sum_{n=-\infty}^{\infty}{q^{n^2}}=1+2\sum_{n=1}^{\infty}{q^{n^2}}.
  \end{align*}
  For the case $k=3$,
  \begin{align}\label{GenOvCongaMod8}
  \sum_{n=0}^{\infty}\overline{p}(n)q^{n}\equiv\phi(q)\cdot \phi(q^2)^2 \equiv 1+2\sum_{n\geq 1}{q^{n^2}}+4\sum_{n\geq 1}{q^{2n^2}}+4\sum_{n,m\geq 1}{q^{2(n^2+m^2)}}\pmod{8}.
  \end{align}
 If $n$ is a nonsquare odd integer, then $n$ can not be written as $m^2,2m^2,$ or $2(m^2+k^2)$ for all $m,k\geq 1$. Thus by \eqref{GenOvCongaMod8}, the result follows.
 \end{proof}
 
 As a consequence, we obtain the following result which gives an infinite family of overpartition congruences modulo $8$.
 \begin{corollary}\label{infam}
 For any integer $\alpha\geq 3,$ and $\beta\geq 0,$ the following holds for each $n\geq 0,$
 \begin{align*}
 \overline{p}(2^\alpha 3^\beta n+5)\equiv 0\pmod{8}.
 \end{align*}
 \end{corollary}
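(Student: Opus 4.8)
The plan is to reduce the entire statement to an application of Theorem \ref{CongOverMod8}, which already guarantees that $\overline{p}(m) \equiv 0 \pmod 8$ whenever $m$ is a nonsquare odd integer. So the only work is to verify that every integer of the form $m = 2^\alpha 3^\beta n + 5$, with $\alpha \geq 3$, $\beta \geq 0$, and $n \geq 0$, is simultaneously \emph{odd} and \emph{not a perfect square}. The role of the hypotheses $\alpha \geq 3$ and the ``$+5$'' will both become visible through a single congruence modulo $8$.

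First I would establish oddness, which is immediate: since $\alpha \geq 3 \geq 1$, the term $2^\alpha 3^\beta n$ is even, so adding $5$ yields an odd integer. Next I would work modulo $8$. Because $\alpha \geq 3$, we have $8 \mid 2^\alpha$, and hence
\begin{align*}
2^\alpha 3^\beta n + 5 \equiv 5 \pmod{8}
\end{align*}
for every choice of $\beta \geq 0$ and $n \geq 0$; note that neither $\beta$ nor $n$ affects this reduction, since the factor $2^\alpha$ alone forces the first term to vanish modulo $8$.

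The key observation is then the standard fact that every odd perfect square is congruent to $1$ modulo $8$ (indeed $(2t+1)^2 = 4t(t+1)+1 \equiv 1 \pmod 8$, as $t(t+1)$ is even). Since $2^\alpha 3^\beta n + 5 \equiv 5 \not\equiv 1 \pmod 8$, the integer $2^\alpha 3^\beta n + 5$ is an odd number that cannot be a square. Therefore it is a nonsquare odd integer, and Theorem \ref{CongOverMod8} applies directly to give $\overline{p}(2^\alpha 3^\beta n + 5) \equiv 0 \pmod 8$.

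I do not anticipate a genuine obstacle here: once Theorem \ref{CongOverMod8} is in hand, the result is a short verification that the arithmetic progression in question lands entirely in the residue class $5 \pmod 8$, which the squares-modulo-$8$ constraint excludes. The only point worth stating carefully is that the conclusion is uniform in $\beta$ and $n$ precisely because $\alpha \geq 3$ already supplies the full factor of $8$; this is what makes the family infinite and independent of the $3^\beta$ factor.
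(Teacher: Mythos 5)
Your proof is correct and follows essentially the same route as the paper: the paper also reduces to showing $2^\alpha 3^\beta n+5$ is a nonsquare odd integer, and its contradiction argument ($2^{\alpha-2}3^\beta n+1=m(m+1)$ forcing odd $=$ even) is just an unpackaged version of your observation that odd squares are $\equiv 1\pmod 8$ while the progression sits in the class $5\pmod 8$. No issues.
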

 \begin{proof}
 Clearly, for $\alpha\geq 3,$ and $\beta\geq 0,$ we have that $ 2^\alpha 3^\beta n+5$ is an odd integer for each $n\geq 0$. Suppose that there is a positive integer $m$ such that $2^\alpha 3^\beta n+5=(2m+1)^2$. Thus, we obtain $2^{\alpha-2} 3^\beta n+1=m(m+1)$. We know that $m(m+1)$ is even which contradicts the fact $2^{\alpha-2} 3^\beta n+1$ is odd since $\alpha-2\geq 1$. Thus no such $m$ exists, and $2^\alpha 3^\beta n+5$ is not an odd square.
 \end{proof}
 
 Next, we obtain a result of  Hirschhorn and Sellers \cite{hirschhorn2005arithmetic}.
 \begin{corollary}\label{OvCon4n+3}
 The following holds for all $n\geq 0,$
 \begin{align}
 \overline{p}(4n+3)\equiv 0\pmod{8}.
 \end{align}
 \end{corollary}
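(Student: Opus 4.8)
The plan is to reduce this immediately to Theorem~\ref{CongOverMod8}, which already establishes that $\overline{p}(n)\equiv 0\pmod 8$ for every nonsquare odd integer $n\geq 0$. So the entire task is to check that $4n+3$ falls into that class, i.e.\ that $4n+3$ is simultaneously odd and never a perfect square, for all $n\geq 0$. Once both facts are in hand, the congruence follows by a single invocation of the theorem with no further computation.

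First I would note that $4n+3$ is odd for every $n\geq 0$, since $4n$ is even and adding $3$ produces an odd number. Next I would rule out that $4n+3$ is a square by a residue argument modulo $4$: every perfect square is congruent to $0$ or $1$ modulo $4$, because an even square satisfies $(2m)^2\equiv 0\pmod 4$ and an odd square satisfies $(2m+1)^2=4(m^2+m)+1\equiv 1\pmod 4$. Since $4n+3\equiv 3\pmod 4$, it lies in neither residue class, hence cannot be a perfect square.

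Combining these two observations, $4n+3$ is a nonsquare odd integer for every $n\geq 0$, so Theorem~\ref{CongOverMod8} applies directly and yields $\overline{p}(4n+3)\equiv 0\pmod 8$. There is no real obstacle here: the substantive content was already absorbed into the proof of Theorem~\ref{CongOverMod8} via the modulo~$8$ expansion in \eqref{GenOvCongaMod8}, and the only point requiring a moment's care is the elementary verification that squares occupy only the residues $0$ and $1$ modulo~$4$, excluding the residue~$3$ that $4n+3$ always occupies.
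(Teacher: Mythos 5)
Your proposal is correct and follows exactly the paper's route: the paper also deduces the corollary by observing that $4n+3$ is a nonsquare odd integer and invoking Theorem~\ref{CongOverMod8}. You merely spell out the residue-mod-$4$ argument that the paper leaves implicit, which is a fine (and entirely standard) way to justify the nonsquare claim.
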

 \begin{proof}
 Similar to the proof of Corollary \ref{infam}, $4n+3$ is a nonsquare odd integer for all $n\geq 0$.
 \end{proof}
 

\begin{proof}[\textbf{Proof of Theorem \ref{5rowed}}]
By Lemma \ref{lem1} and the fact \eqref{OverPartMod2}, $4\overline{p}(n)\equiv 0\pmod{8}$ for every integer $n\geq 1$,
\begin{align*}
	\sum_{n=0}^{\infty}{\overline{pl}_{5}(n)q^{n}}&=f(q) f(q^2)^2 f(q^3)^3 f(q^4)^4  \prod_{n\geq 5}{f(q^n)^5}\\
	&\equiv f(q^2) \; f(q^3)^2 \; f(q^4)^3 \; \prod_{n=1}^{\infty}f(q^n)\pmod{8} \\ 
	& \equiv \left( 1+2\sum_{n\geq 1}{q^{2n}}\right)\left( 1+2\sum_{n\geq 1}{q^{3n}}\right)^2 \left( 1+2\sum_{n\geq 1}{q^{4n}}\right)^3 \left(1+\sum_{n\geq 1}{\overline{p}(n)q^n}\right)\pmod{8}\\
	&\equiv 1+2\sum_{n\geq 1}{q^{2n}}+4\sum_{n\geq 1}{q^{3n}}+6\sum_{n\geq 1}{q^{4n}}\\ &\;\;\;\;\;\;+4\sum_{n,m\geq 1}{q^{3(n+m)}}+4\sum_{n,m\geq 1}{q^{4(n+m)}}+4\sum_{n,m\geq 1}{q^{2n+4m}}\\
	&\;\;\;\;\;\;+\sum_{n\geq 1}{\overline{p}(n)q^n}+2\sum_{n,m\geq 1}{\overline{p}(n)q^{n+2m}}+6\sum_{n,m\geq 1}{\overline{p}(n)q^{n+4m}}\pmod{8}.
\end{align*}
We observe that 
\begin{align*}
2\sum_{n,m\geq 1}{\overline{p}(n)q^{n+2m}}+6\sum_{n,m\geq 1}{\overline{p}(n)q^{n+4m}}&=2\sum_{n,m\geq 1}{\overline{p}(n)q^{n+4m-2}}+8\sum_{n,m\geq 1}{\overline{p}(n)q^{n+4m}}\\
&\equiv 2\sum_{n,m\geq 1}{\overline{p}(n)q^{n+4m-2}}\pmod{8}.
\end{align*}
Thus, we obtain
\begin{align}\label{EqPL5}
	\sum_{n=0}^{\infty}{\overline{pl}_{5}(n)q^{n}}
	&\equiv 1+\sum_{n\geq 1}{\left(2q^{2n}+4q^{3n}+6{q^{4n}}\right)}+4\sum_{n,m\geq 1}{\left(q^{3(n+m)}+q^{4(n+m)}+q^{2(n+2m)}\right)}\nonumber\\
	&\;\;\;\;+\sum_{n\geq 1}{\overline{p}(n)q^n}+2\sum_{n,m\geq 1}{\overline{p}(n)q^{n+4m-2}}\pmod{8}.
\end{align}
Note that $12k+1$ is not divisible by $2$,$3$ and $4$. So for any $k\geq 1$, the term $q^{12k+1}$ will occur only in the series
	$$\sum_{n\geq 1}{\overline{p}(n)q^n}, \sum_{n,m\geq 1}{\overline{p}(n)q^{n+4m-2}},$$
when $n=12k+1, 12k+1-(4m-2)$ arising from the terms $q^n, q^{12k+1-(4m-2)}$ respectively for $m=1,\dots,3k.$ Hence, the coefficient of $q^{12k+1}$ in the series on the right hand side of \eqref{EqPL5} is then given by
\begin{align*}
\overline{p}(12k+1)+2\sum_{m=1}^{3k}\overline{p}(12k-4m+3).
\end{align*}
Note that by Corollary \ref{OvCon4n+3}, for all $k\geq 1$ and  $m=1,\dots,3k$, we have
 $$\overline{p}(12k-4m+3)=\overline{p}(4(3k-m)+3)\equiv 0\pmod{8}.$$  
 Thus,
\begin{align*}
\sum_{m=1}^{3k}\overline{p}(12k-4m+3)\equiv 0\pmod{8}.
\end{align*} 
Therefore, for all $k\geq 1$,
\begin{align*}
\overline{pl}_{5}(12k+1)\equiv \overline{p}(12k+1)+ \sum_{m=1}^{3k}\overline{p}(12k-4m+3)\equiv\overline{p}(12k+1)\pmod{8}.
\end{align*}
For the case $k=0,$
$$\overline{pl}_{5}(1)\equiv \overline{p}(1)\pmod{8}.$$ 
Thus, for every integer $k\geq 0$,
\begin{align*}
\overline{pl}_{5}(12k+1)\equiv 	\overline{p}(12k+1)\pmod{8},
\end{align*}
as desired for \eqref{Con1Pl5}.
The congruence \eqref{ConPl5} can be proved similarly.  
\end{proof}

We lastly end this section by combining Theorem \ref{5rowed} and Corollary \ref{infam} to obtain the following infinite family of $5$-rowed plane overpartition congruences modulo $8$.
\begin{corollary}\label{LastThm}
For any integers $\alpha \geq 3$ and $\beta \geq 1$, the following holds for all $n\geq 0,$
	\begin{align}
	\overline{pl}_{5}(2^\alpha 3^\beta n+5)\equiv 0\pmod{8}.
	\end{align}
\end{corollary}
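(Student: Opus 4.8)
The plan is to reduce the statement to the two ingredients we are told to combine, Theorem \ref{5rowed} and Corollary \ref{infam}, by first checking that the argument $2^\alpha 3^\beta n+5$ lands in the arithmetic progression to which the second congruence of Theorem \ref{5rowed} applies. Since $\alpha\geq 3\geq 2$ we have $4\mid 2^\alpha$, and since $\beta\geq 1$ we have $3\mid 3^\beta$; hence $12\mid 2^\alpha 3^\beta$ and therefore $12\mid 2^\alpha 3^\beta n$ for every $n\geq 0$. Setting $m:=2^{\alpha-2}3^{\beta-1}n$, this gives $2^\alpha 3^\beta n+5=12m+5$, so the argument is $\equiv 5\pmod{12}$. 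This is the only place where the hypotheses are genuinely used, and it is essentially bookkeeping.

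Next I would invoke the congruence \eqref{ConPl5} of Theorem \ref{5rowed}, namely $\overline{pl}_{5}(12m+5)\equiv \overline{p}(12m+5)\pmod 8$ valid for all $m\geq 0$ (including $m=0$, covering $n=0$). Substituting $12m+5=2^\alpha 3^\beta n+5$ yields
\begin{align*}
\overline{pl}_{5}(2^\alpha 3^\beta n+5)\equiv \overline{p}(2^\alpha 3^\beta n+5)\pmod 8,
\end{align*}
which transfers the problem from restricted plane overpartitions to ordinary overpartitions.

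Finally, since $\alpha\geq 3$ and $\beta\geq 1\geq 0$ satisfy the hypotheses of Corollary \ref{infam}, that corollary gives $\overline{p}(2^\alpha 3^\beta n+5)\equiv 0\pmod 8$ for all $n\geq 0$. Chaining this with the congruence from the previous paragraph completes the proof. There is no substantive obstacle here: the argument is a direct concatenation, and the only subtlety worth flagging is that the condition $\alpha\geq 3$ (rather than merely $\alpha\geq 2$) together with $\beta\geq 1$ is what simultaneously secures the reduction to the residue class $12m+5$ and the applicability of Corollary \ref{infam}, so the two hypotheses are exercised in tandem.
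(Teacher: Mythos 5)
Your proof is correct and follows essentially the same route as the paper: rewrite $2^\alpha 3^\beta n+5$ as $12(2^{\alpha-2}3^{\beta-1}n)+5$, apply congruence \eqref{ConPl5} of Theorem \ref{5rowed}, and then conclude with Corollary \ref{infam}. Your write-up is in fact slightly more careful than the paper's in spelling out why the hypotheses $\alpha\geq 3$, $\beta\geq 1$ guarantee both the reduction to the class $12m+5$ and the applicability of Corollary \ref{infam}.
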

\begin{proof}
Note that by Theorem \ref{5rowed}, for all $n\geq 0,$
\begin{align*}
\overline{pl}_{5}(2^\alpha 3^\beta n+5)=\overline{pl}_{5}(12(2^{\alpha-2} 3^{\beta-1} n)+5)\equiv \overline{p}_{5}(12(2^{\alpha-2} 3^{\beta-1} n)+5)\pmod{8}.
\end{align*}
The rest follows by Corollary \ref{infam}.
\end{proof}

\section{\textbf{Concluding Remarks}}\label{Remark}

We close this paper with a few comments and ideas. We established several examples of plane and restricted plane overpartition congruences modulo $4$ and $8$. Often, our technique is based on applying Lemma \ref{lem1} up to a small power of $2$, then collecting the coefficients of certain terms of the desired power. Lemma \ref{lem1} can be a very powerful tool to find and prove additional congruences modulo powers of $2$ for any partition function that involves products containing functions of the form $f(q^n)^m$ where $f$ is defined by $f(q)=\frac{1+q}{1-q}$. For example, the overpartition function has this property.
 
Based on computational evidence, we conjecture that for each integer $r\geq 1,$ and each $k\geq 1$, there exist infinitely many integers $n$ such that
\begin{align}\label{tackle}
\overline{pl}_{k}(n)\equiv 0\pmod{2^r}.
\end{align}
If this holds, then for infinitely many integers $n$,
\begin{align}
\overline{pl}(n)\equiv 0\pmod{2^r}.
\end{align}

Lemma \ref{lem1} might be a powerful tool to tackle such congruences as \eqref{tackle}. We  note that Theorems \ref{th22} and \ref{5rowed} suggest there might be other arithmetic relations between plane overpartitions and overpartitions  that are worth investigating. Furthermore, computational evidence suggests that there is a relation modulo powers of $2$ between overpartitions and restricted plane overpartitions. Thus, we conjecture that for each  $r\geq 1$ and each $k\geq 1$, there exist infinitely many integers $n$, such that
\begin{align*}
\overline{pl}_{k}(n)\equiv \overline{p}(n) \pmod{2^r}.
\end{align*}

Another approach to establish congruences for plane overpartitions modulo powers of $2$ is to look for an iteration formula for plane overpartitions similar to that of overpartitions  given by Theorem $2.2$ of \cite{hirschhorn2006arithmetic}. That is, consider
\begin{align*}
\overline{P}(q)=\phi(q)\;\phi^2(q^2)\;\phi^4(q^4)\;\phi^8(q^8)\cdots,
\end{align*}
and let
\begin{align*}
G_{n}(q):=\prod_{i=n+1}^{\infty}{\frac{1+q^i}{1-q^i}=\prod_{i=n+1}^{\infty}{f(q^{i})}}.
\end{align*}
Thus the generating function for plane overpartitions can be rewritten as
\begin{align*}
\overline{PL}(q)=&\overline{P}(q)\cdot G_{1}(q) \cdot G_{2}(q)\cdot G_{3}(q) \cdots\\
&=\prod_{n=1}^{\infty}{\phi(q^{2^{n-1}})^{2^{n-1}} G_{n}(q).}
\end{align*}
Investigating properties of $G_{n}(q)$ might yield congruences modulo higher powers of $2$ for plane overpartitions.

\section{\textbf{Acknowledgements}}
This work is a part of my PhD thesis written at Oregon State University. I would like to express special thanks of gratitude to my advisor Professor Holly Swisher for her guidance and helpful suggestions. Also, I would like to thank Professor James Sellers for helpful discussions and encouragement that motivated this work. 

\bibliographystyle{plain}
\bibliography{ref}

\begin{thebibliography}{10}

\bibitem{ali1}
Ali~H Al-Saedi.
\newblock Using periodicity to obtain partition congruences.
\newblock {\em Journal of Number Theory}, 178:158--178, 2017.

\bibitem{andrews1965simple}
George~E Andrews.
\newblock A simple proof of {J}acobi's triple product identity.
\newblock {\em Proceedings of the American Mathematical Society},
  16(2):333--334, 1965.

\bibitem{andrews}
George~E Andrews.
\newblock {\em The theory of partitions}.
\newblock Number~2. Cambridge university press, 1998.

\bibitem{bringmann2008rank}
Kathrin Bringmann and Jeremy Lovejoy.
\newblock Rank and congruences for overpartition pairs.
\newblock {\em International Journal of Number Theory}, 4(02):303--322, 2008.

\bibitem{chen2013proof}
William~YC Chen and Ernest~XW Xia.
\newblock Proof of a conjecture of {H}irschhorn and {S}ellers on
  overpartitions.
\newblock {\em arXiv preprint arXiv:1307.4155}, 2013.

\bibitem{corteel2004overpartitions}
Sylvie Corteel and Jeremy Lovejoy.
\newblock Overpartitions.
\newblock {\em Transactions of the American Mathematical Society},
  356(4):1623--1635, 2004.

\bibitem{corteelplane}
Sylvie Corteel, Cyrille Savelief, and Mirjana Vuleti{\'c}.
\newblock Plane overpartitions and cylindric partitions.
\newblock {\em Journal of Combinatorial Theory, Series A}, 118(4):1239--1269,
  2011.

\bibitem{fortin2005jagged}
J-F Fortin, P~Jacob, and P~Mathieu.
\newblock Jagged partitions.
\newblock {\em The Ramanujan Journal}, 10(2):215--235, 2005.

\bibitem{hirschhorn2005arithmetic}
Michael~D Hirschhorn and James~A Sellers.
\newblock Arithmetic relations for overpartitions.
\newblock {\em J. Combin. Math. Combin. Comput}, 53:65--73, 2005.

\bibitem{hirschhorn}
Michael~D Hirschhorn and James~A Sellers.
\newblock An infinite family of overpartition congruences modulo 12.
\newblock {\em Integers: Electronic Journal of Combinatorial Number Theory},
  5(A20):A20, 2005.

\bibitem{hirschhorn2006arithmetic}
Michael~D Hirschhorn and James~A Sellers.
\newblock Arithmetic properties of overpartitions into odd parts.
\newblock {\em Annals of Combinatorics}, 10(3):353--367, 2006.

\bibitem{kim2008overpartition}
Byungchan Kim.
\newblock The overpartition function modulo 128.
\newblock {\em Integers}, 8(A38):A38, 2008.

\bibitem{kwong3}
Y.~H.~Harris Kwong.
\newblock Minimum periods of binomial coefficients modulo {M}.
\newblock {\em Fibonacci Quart}, 27(4):348--351, 1989.

\bibitem{kwong1}
YH~Harris Kwong.
\newblock Minimum periods of partition-functions modulo-m.
\newblock {\em Utilitas Mathematica}, 35:3--8, 1989.

\bibitem{kwong2}
YH~Harris Kwong.
\newblock Periodicities of a class of infinite integer sequences modulo {M}.
\newblock {\em Journal of Number Theory}, 31(1):64--79, 1989.

\bibitem{lovejoy2003gordon}
Jeremy Lovejoy.
\newblock Gordon's theorem for overpartitions.
\newblock {\em Journal of Combinatorial Theory, Series A}, 103(2):393--401,
  2003.

\bibitem{lovejoy2004overpartition}
Jeremy Lovejoy.
\newblock Overpartition theorems of the {R}ogers--{R}amanujan type.
\newblock {\em Journal of the London Mathematical Society}, 69(03):562--574,
  2004.

\bibitem{lovejoyncolor}
Jeremy Lovejoy and Olivier Mallet.
\newblock n-color overpartitions, twisted divisor functions, and
  {R}ogers-{R}amanujan identities.
\newblock {\em South East Asian J. Math. Math. Sci.(Andrews’ 70th birthday
  issue)}, 6:23--36, 2008.

\bibitem{lovejoy2008rank}
Jeremy Lovejoy and Robert Osburn.
\newblock Rank differences for overpartitions.
\newblock {\em The Quarterly Journal of Mathematics}, 2008.

\bibitem{mahlburg2004overpartition}
Karl Mahlburg.
\newblock The overpartition function modulo small powers of 2.
\newblock {\em Discrete Mathematics}, 286(3):263--267, 2004.

\bibitem{periodic}
Matthew~S Mizuhara, James~A Sellers, and Holly Swisher.
\newblock A periodic approach to plane partition congruences.
\newblock {\em arXiv preprint arXiv:1507.02260}, 2015.

\bibitem{newman}
Morris Newman.
\newblock Periodicity modulo m and divisibility properties of the partition
  function.
\newblock {\em Transactions of the American Mathematical Society},
  97(2):225--236, 1960.

\bibitem{nijenhuis1987}
Albert Nijenhuis and Herbert~S Wilf.
\newblock Periodicities of partition functions and {S}tirling numbers modulo p.
\newblock {\em Journal of Number Theory}, 25(3):308--312, 1987.

\bibitem{niven1951asymptotic}
Ivan Niven et~al.
\newblock The asymptotic density of sequences.
\newblock {\em Bulletin of the American Mathematical Society}, 57(6):420--434,
  1951.

\bibitem{treneer2006congruences}
Stephanie Treneer.
\newblock Congruences for the coefficients of weakly holomorphic modular forms.
\newblock {\em Proceedings of the London Mathematical Society}, 93(2):304--324,
  2006.

\bibitem{wang2014another}
Liuquan Wang.
\newblock Another proof of a conjecture by {H}irschhorn and {S}ellers on
  overpartitions.
\newblock {\em Journal of Integer Sequences}, 17(2):3, 2014.

\end{thebibliography}

\end{document}